\pgfplotsset{width=10cm,compat=1.9}
\numberwithin{equation}{section}
\newtheorem{thm}{Theorem}[section]
\newtheorem{lma}[thm]{Lemma}
\newtheorem{cor}[thm]{Corollary}
\newtheorem{ques}[thm]{Question}
\renewcommand{\epsilon}{\varepsilon}
\newcommand{\rd}{\mathbb{R}^d}
\newcommand{\conp}{\text{Con}^+(n)}
\renewcommand{\geq}{\geqslant}
\renewcommand{\leq}{\leqslant}
\renewcommand{\emptyset}{\varnothing}
\renewcommand{\epsilon}{\varepsilon}
\renewcommand{\geq}{\geqslant}
\renewcommand{\leq}{\leqslant}
\newcommand{\ubd}{\overline{\dim}_{\textup{B}}}
\newcommand{\lbd}{\underline{\dim}_{\textup{B}}}
\newcommand{\hd}{\dim_{\textup{H}}}
\newcommand{\bd}{\dim_{\textup{B}}}
\newcommand{\I}{\textbf{i}}
\def\@setauthors{%
  \begingroup
  \def\thanks{\protect\thanks@warning}%
  \trivlist
  \centering\footnotesize \@topsep30\p@\relax
  \advance\@topsep by -\baselineskip
  \item\relax
  \author@andify\authors
  \def\\{\protect\linebreak}

  \normalsize\lowercase{\authors}%
  
	\ifx\@empty\contribs
  \else
    ,\penalty-3 \space \@setcontribs
    \@closetoccontribs
  \fi
  \endtrivlist
  \endgroup
}
\def\@settitle{\begin{center}
\LARGE\lowercase{\@title}
  \end{center}%
}
\newcommand{\authoremail}[1]{\email{\href{mailto:#1}{\color{lightblue}{#1}}}}
\newcommand{\authoraddress}[1]{\address{\normalfont{#1}}}
\definecolor{lightblue}{HTML}{2B77A4}
\definecolor{darkred}{HTML}{9E0D0D}
\title{Inhomogeneous attractors and  box dimension}
\author{Jonathan M. Fraser}
\thanks{JMF was financially supported by  a  \emph{Leverhulme Trust Research Project Grant} (RPG-2019-034) and an \emph{EPSRC Standard Grant} (EP/Y029550/1).}
\date{}
\begin{document}

%\date{}

\maketitle
\thispagestyle{empty}

\begin{abstract}
Iterated function systems (IFSs) are one of the most important tools for building examples of fractal sets exhibiting some kind of `approximate  self-similarity'.  Examples include self-similar sets, self-affine sets etc.  A beautiful variant on the standard IFS model was introduced by Barnsley and Demko in 1985 where  one builds an \emph{inhomogeneous} attractor by taking the closure of the orbit of a fixed compact condensation set under a given standard IFS.  In this expository  article I will discuss the dimension theory of inhomogeneous attractors, giving several examples and some open questions.   I will focus on the upper box dimension with emphasis on  how to derive good estimates, and when these estimates fail to be sharp.
\\ \\ 
\emph{Mathematics Subject Classification 2020}:  28A80, 26A18, 37F32.
\\
\emph{Key words and phrases}:   inhomogeneous attractor, iterated function system, orbital set,  box dimension, self-similarity.
\end{abstract}

\tableofcontents

%stuarts Hausdorff measure
%lower box
%assouad
%measures
%Fourier dimension
%lapidus stuff

\section{Introduction}

\subsection{Inhomogeneous attractors}

Let $X$ denote a compact subset of Euclidean space (often the closed unit cube or unit ball). An iterated function system (IFS) is a finite collection $\mathbb{I}=\{ S_i \}_{i=1}^{N}$ of contraction mappings which map $X$ into itself.    It is a fundamental result in fractal geometry due to Hutchinson (see \cite{falconer, hutchinson}) that for every IFS there exists a unique non-empty compact set,  $F$, called the attractor, which satisfies
\begin{equation} \label{hom}
F = \bigcup_{i=1}^{N} S_i (F).
\end{equation}
We call such attractors \emph{homogeneous} attractors.

\begin{figure}[H] \label{examplesss}
	\centering
	\includegraphics[width=0.48\textwidth]{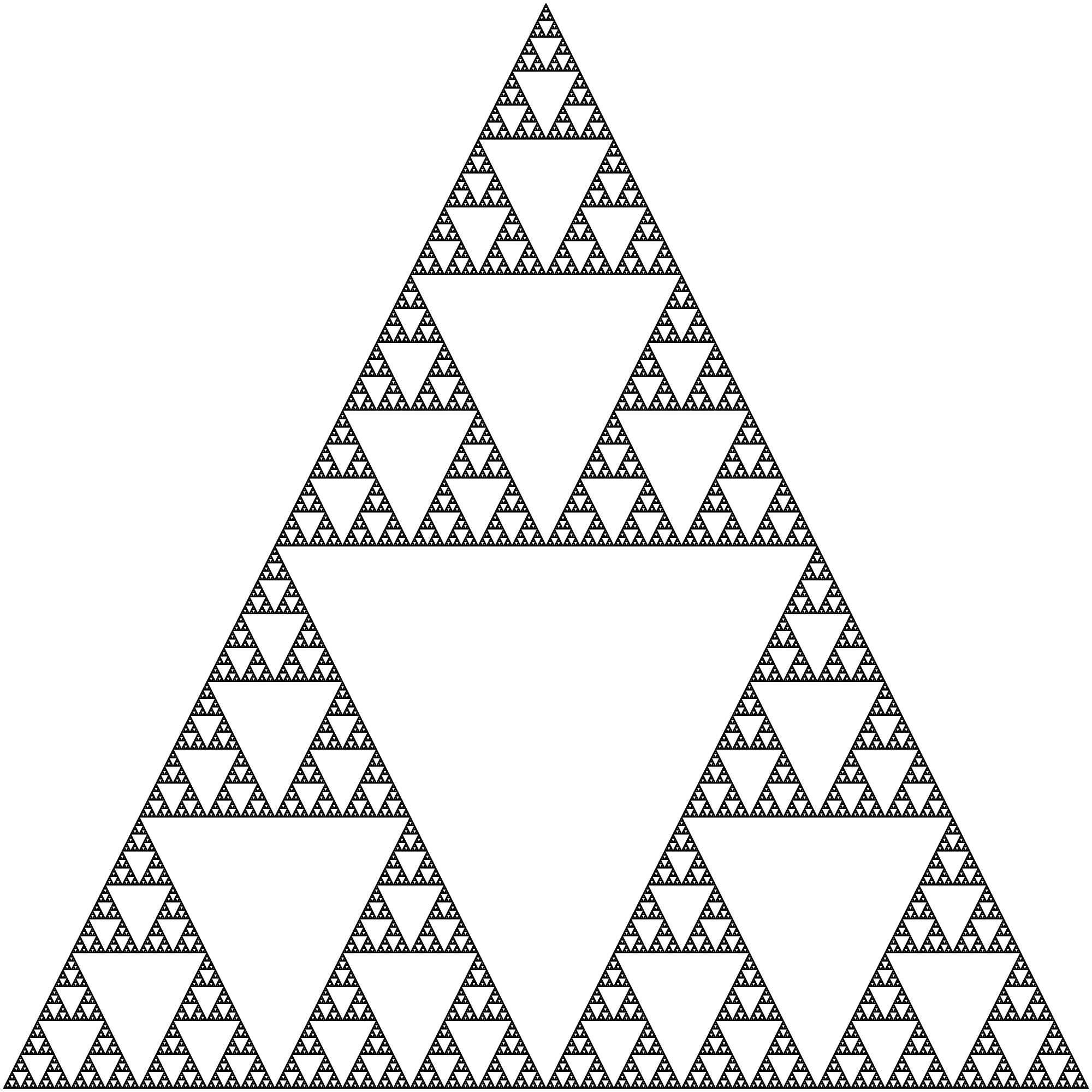} \includegraphics[width=0.48\textwidth]{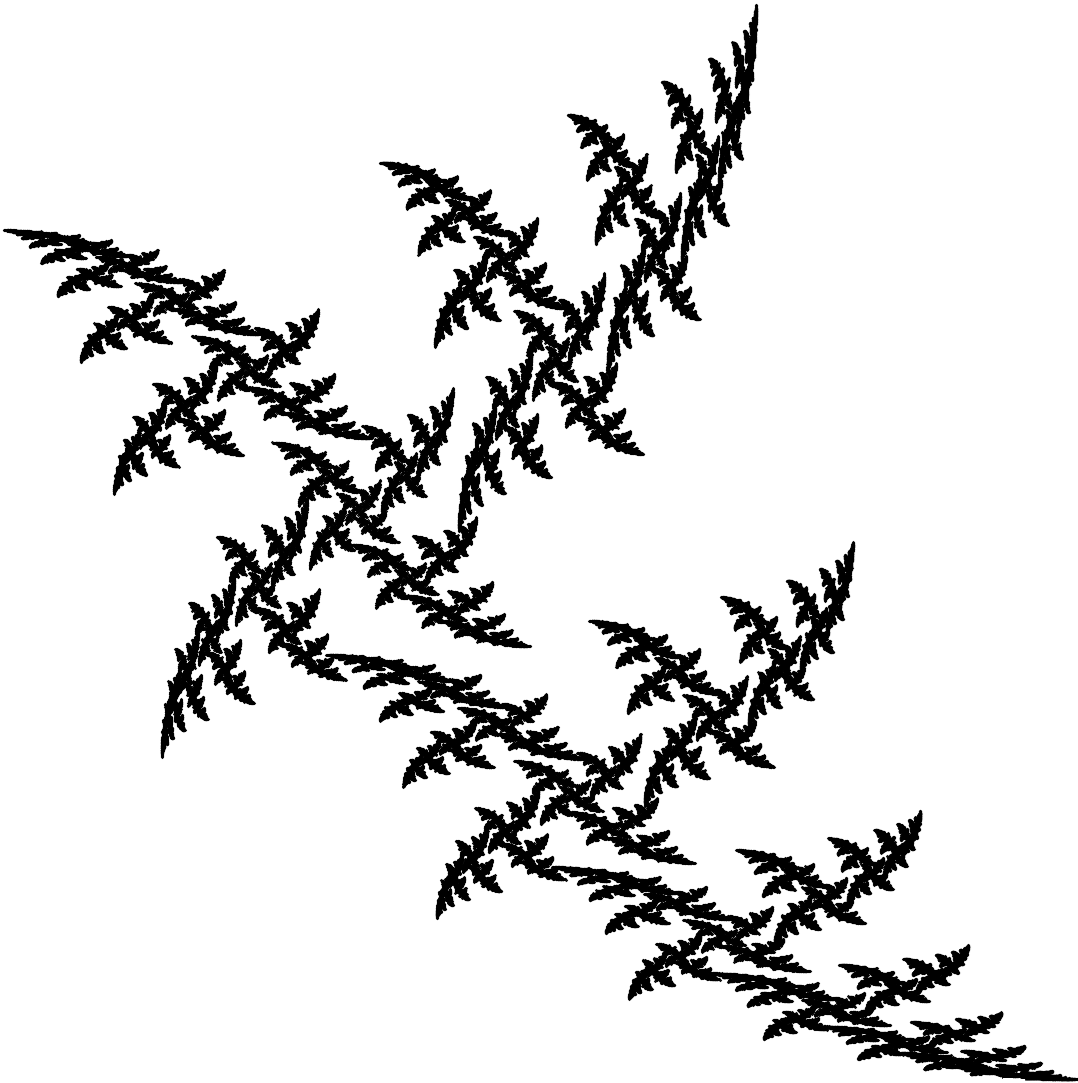}
\caption{Two homogeneous attractors of classical IFSs. On the left is the Sierpi\'nski triangle, which is a self-similar set generated by three contractions all contracting uniformly by 1/2.  On the right is a self-affine set generated by two affine contractions.}
\end{figure}

Inhomogeneous attractors are built from standard IFSs in the following way.  The ingredients are a standard IFS and a fixed  compact set $C \subseteq X$, which we call the \emph{condensation set}.  Analogous to the classical setting above, there is a unique non-empty compact set, $F_C$, satisfying
\begin{equation} \label{inhom}
F_C =  \bigcup_{i=1}^{N} S_i (F_C) \ \cup \ C,
\end{equation}
which we refer to as the \emph{inhomogeneous} attractor (with condensation $C$).   Setting $C = \emptyset$, we recover the associated  homogeneous attractor as $F_\emptyset$. When the maps in the IFS are similarities, $F_C$ is called an \emph{inhomogeneous self-similar set} and $F_\emptyset$ is called a \emph{self-similar set} with similar naming conventions for conformal maps, affine maps, etc.   Inhomogeneous attractors were introduced and studied in \cite{barndemko} (see also \cite{hata})  and are also discussed in detail in \cite{superfractals} where, among other things, Barnsley gives applications of these schemes to image compression.  Roughly speaking, the idea here is that one only needs to store the data $\{\mathbb{I}, C\}$ to recover the set $F_C$ which is much more complicated, e.g. containing infinitely many distorted and scaled copies of $C$.   

 Define the \emph{orbital set}, $\mathcal{O}$, by
\[
\mathcal{O} \ = \ C \ \cup \  \bigcup_{k \in \mathbb{N}} \ \  \bigcup_{i_1, \dots, i_k \in \{1, \dots, N\}} S_{i_1} \circ \cdots \circ S_{i_k}(C),
\]
that is, $\mathcal{O}$ is the union of the condensation set, $C$, together with all images of $C$ under compositions of maps from  the IFS.  The term orbital set was introduced in \cite{superfractals} and  this set plays an important role in the structure of inhomogeneous attractors.  Indeed,
\begin{equation} \label{structure}
F_C \ = \  F_\emptyset \cup \mathcal{O} \ = \  \overline{\mathcal{O}},
\end{equation}
where $F_\emptyset$ is the \emph{homogeneous} attractor of the IFS, $\mathbb{I}$. The formulations \eqref{structure} are straightforward to prove, see for example  \cite[Lemma 3.9]{ninaphd} in the case where  the defining maps are similarities, but  we encourage the reader to try to prove them  for themselves.  

%\begin{figure}[H]
%	\centering
%	\includegraphics[width=70mm]{flock}
%\qquad \qquad
%	\includegraphics[width=70mm]{hom2}
%	\caption{\emph{A flock of birds from above} (left).  }
%\end{figure}

\begin{figure}[H] \label{examples1}
	\centering
	\includegraphics[width=0.44\textwidth]{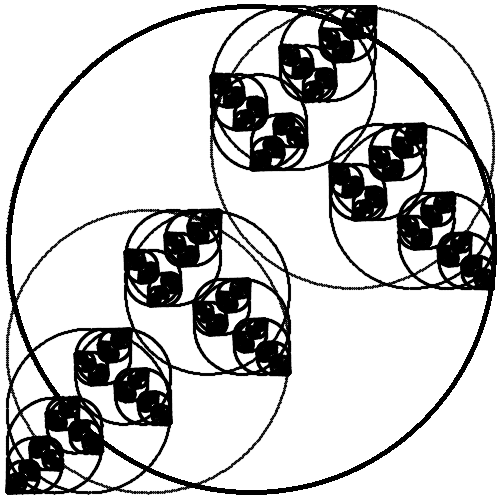}\includegraphics[width=0.44\textwidth]{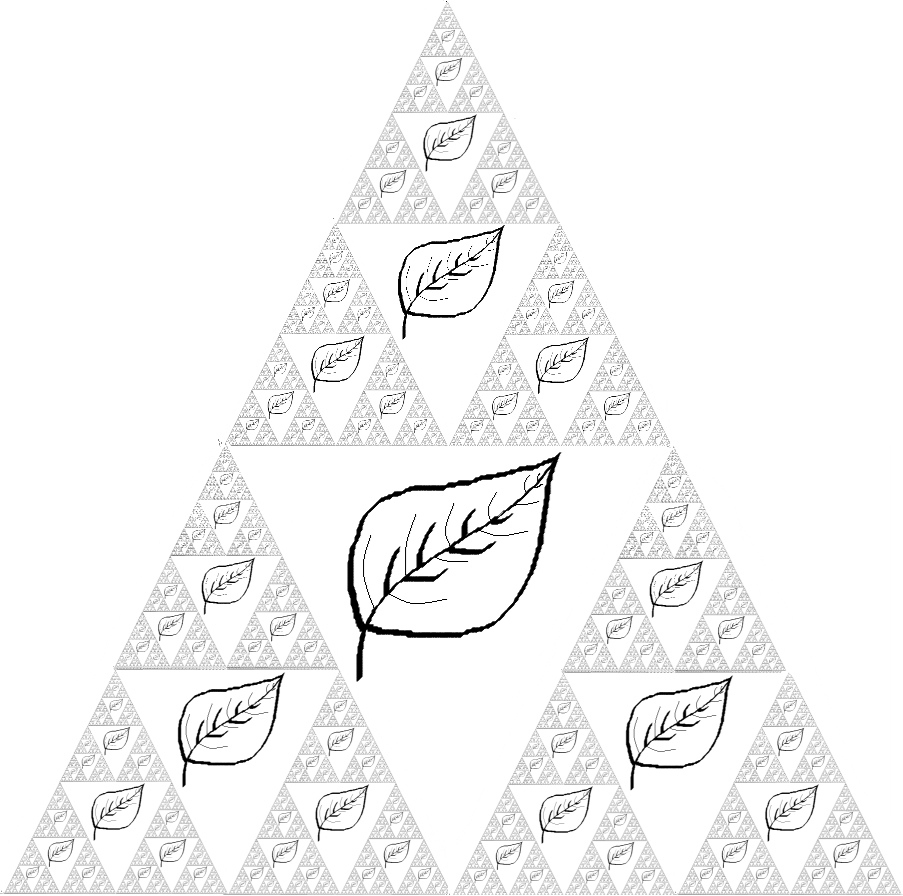}
\caption{Two inhomogeneous attractors. On the left, the IFS consists of two similarity contractions (one of which has a non-trivial rotational component).  The condensation set $C$ is the largest circle in the picture.  You can see the homogeneous attractor emerging as the circles cluster towards it.  On the right is an inhomogeneous attractor based on the Sierpi\'nski triangle.  The condensation set is the large central leaf and one can imagine this attractor depicts leaves falling to the forest floor, seen from above.}
\end{figure}

\subsection{Dimension theory of inhomogeneous attractors}

Inhomogeneous attractors are often fractal and (unlike standard IFS attractors) both the fractal behaviour of the condensation set $C$ and the underlying IFS come in to play.  One of the central concepts  in fractal geometry is that of dimension.  There are many different ways to define the  dimension of a fractal and part of the beauty of the subject is to understand how these different notions relate to each other.  For now, we work with a general dimension $\dim$.  If $\dim$  is monotone, countably stable, and stable under Lipschitz maps, then it follows immediately from (\ref{structure}) that
\begin{equation} \label{expected}
\dim F_C = \max \{ \dim  F_\emptyset, \dim C\}.
\end{equation}
See \cite{falconer} for more information about general dimensions and the properties mentioned above. In particular, \eqref{expected} holds in complete generality for the Hausdorff and packing dimensions.  Therefore, it is perhaps more interesting to consider notions of dimension which are not countably stable, such as the upper and lower box dimension or the Assouad dimension.    Even though the formula \eqref{expected} is not immediate for these dimensions, one might still expect it to hold; at least in certain situations.  However, we will see that \eqref{expected} can fail in several different ways, and exploring such examples---and identifying situations where \eqref{expected} does hold---is our main goal.  

  We  first recall the definition of the upper and lower box dimensions, discuss potential differences between them in our context, and then briefly describe some situations where other dimensions have been considered.  

 Given a bounded set $E \subseteq \rd$ and a  scale $\delta>0$, write $N_\delta(E)$ to denote the smallest number of sets of diameter $\delta$ which are needed to cover $E$.  Thus, by fixing a scale, we effectively discretize the problem of quantifying the `size' of $E$ and we can then extract a meaningful notion of dimension as the scale tends to 0.  More precisely, the \emph{upper} and \emph{lower box dimensions} of $E$ are defined by
\[
\ubd E = \limsup_{\delta \to 0} \frac{\log N_\delta(E)}{-\log \delta}
\]
and
\[
\lbd E = \liminf_{\delta \to 0} \frac{\log N_\delta(E)}{-\log \delta},
\]
respectively. If the upper and lower box dimensions agree, then we refer to the common value as the \emph{box dimension}, denoted by $\bd E$.  We refer the reader to \cite{falconer} for more details and background on the upper and lower box dimensions.

 The lower box dimension of inhomogeneous attractors  is rather more challenging to study than the upper box dimension but  was considered in some detail in \cite{fraserinhom}.  In particular, it was shown that \eqref{expected} can fail in even the simplest settings for lower box dimension.  To see why    it is  difficult to study the lower box dimension, consider a simple situation where the IFS consists of just two similarities but that the condensation set $C$ has a complicated scaling behaviour leading to $\lbd C < \ubd C$.  Then, immediately we get
\[
\max \{ \lbd  F_\emptyset, \ \lbd  C\} \leq \lbd F_C \leq \ubd F_C 
\]
and one might try to prove that the first inequality is in fact an equality (that is, try to verify \eqref{expected}).  But this is not true in general because the orbital set consists of many different copies of $C$ appearing at different scales which means that the `liminf behaviour' leading to $\lbd C$ may not be witnessed by $F_C$ and is instead `smoothened out'. Frustrated, one may then try to prove that the second inequality is an equality but, again, this is certainly not true in general.  It is true that many copies of $C$ appear at different scales and that \emph{some} of these scales may witness the `limsup behaviour' leading to $\ubd C$, but there is no reason to expect the `smoothening' to take the lower box dimension of $F_C$ all the way up to the right hand side.

An interesting connection between the study of the lower box dimension of inhomogeneous self-similar sets and the  lower box dimension of infinitely generated (homogeneous) self-similar or self-conformal sets has been identified in \cite{banaji}.  For infinitely generated attractors, the upper and lower box dimensions of the countable set of fixed points of the defining maps play a role in the dimension theory and this set of fixed points is reproduced at many scales.  This creates a similar phenomenon as we described above for  inhomogeneous attractors and leads to similar subtle features of the lower box dimension in comparison with the upper box dimension.

We can be more optimistic concerning  the \emph{upper} box dimension of inhomogeneous attractors,   as we shall see later. For definiteness, our main focus is now on verifying (or refuting)
\begin{equation} \label{boxexpected}
\ubd F_C = \max\{ \ubd F_\emptyset, \ubd C\}
\end{equation}
although we will also make some reference to \eqref{expected} for the lower box dimension.

As mentioned above, there are many other interesting dimension theoretic questions one can ask about inhomogeneous attractors.  For brevity, we only briefly summarise some of  them here, encouraging the reader to refer to the growing body of literature. The Assouad dimension of inhomogeneous attractors was studied by K\"aenm\"aki and   Lehrb\"ack \cite{kaenmaki}, where \eqref{expected} was verified for inhomogeneous self-similar sets satisfying some natural conditions.  Although \eqref{expected} is immediate for Hausdorff dimension, one might still consider the behaviour of the Hausdorff measure of $F_C$ in the Hausdorff dimension.  This problem was comprehensively studied  by Burrell \cite{burrell} for  inhomogeneous self-similar sets. A graph-directed model for inhomogeneous attractors was considered by Dubey and Verma \cite{verma} which built on work in the inhomogeneous  self-similar case from \cite{fraserinhom}.  This was further studied by B\'ar\'any--K\"aenm\"aki--Nissinen  \cite{kaenmakigd} where finer properties of the covering function were described. There is also a connection with the complex dimensions studied by Lapidus--Radunovi\'c--\v{Z}ubrini\'c  \cite{lapidus1, lapidus2}, where inhomogeneous attractors emerge as notable examples in the development of the general theory.

There is also a wealth of literature studying inhomogeneous self-similar \emph{measures}.  These are defined similarly, but with $C$ replaced by a condensation measure $\nu$ and the IFS further equipped with a (sub-)probability vector.  One may then ask dimension theoretic questions about the associated inhomogeneous measures $\mu_\nu$.   Such questions include the study of the $L^q$ dimensions, considered by \cite{liszka, olseninhom, shuqinlq},  and the    multifractal formalism, studied by Olsen and Snigireva \cite{olsenmultifractal}. In another direction, one may  study the Fourier analytic behaviour of the  inhomogeneous self-similar measures $\mu_\nu$.  This  was considered in \cite{bisbas, olsenfourier,shuqinfourier}.

We ask for forgiveness from the reader for any glaring omissions in the brief history given above, but also for omitting further discussion of the many interesting results and questions which surround inhomogeneous constructions.  From now on we focus only on the upper box dimensions of $F_C$ and consider several different settings.

\subsection{Notation}

For real-valued functions $A$ and $B$, we will write $A(x) \lesssim B(x)$ if there exists a constant $c>0$ independent of the variable $x$ such that $A(x) \leq c B(x)$, $A(x) \gtrsim B(x)$ if $B(x) \lesssim A(x)$ and $A(x) \approx B(x)$ if $A(x) \lesssim B(x)$ and $A(x) \gtrsim B(x)$.  In our setting, $x$ is normally  $\delta>0$ from the definition of box dimension or a parameter  $k \in \mathbb{N}$  related to $\delta$ by, for example, $\delta=2^{-k}$.  The implicit  constant $c$ above can depend on fixed quantities only, such as the condensation set $C$ or the IFS.

\section{Inhomogeneous self-similar sets}

In the case where the defining contractions in the IFS $\mathbb{I}$ are non-degenerate similarities, recall that  the homogeneous attractor $F_\emptyset$ is called a \emph{self-similar set} and the inhomogeneous attractor $F_C$ is called an \emph{inhomogeneous self-similar set}.  In this case, for each $i = 1, \dots, N$, there exists a well-defined contraction ratio $\text{Lip}(S_i) \in (0,1)$ such that for all $x,y \in X$
\[
|S_i(x) - S_i(y)| = \text{Lip}(S_i) |x-y|,
\]
that is, $S_i$ contracts all distances uniformly by $\text{Lip}(S_i)$.  The \emph{similarity dimension} of the self-similar set (or perhaps, more accurately, of the IFS) is defined to be the unique real solution $s$ to the Hutchinson--Moran formula
\begin{equation} \label{hutch}
\sum_{i=1}^N \text{Lip}(S_i)^s = 1.
\end{equation}
The similarity dimension is in some sense the `best guess' for the dimension of the self-similar set  $F_\emptyset$.  In fact if the `pieces' $S_i(F_\emptyset)$ do not overlap too much (for example, they are pairwise disjoint), then 
\[
\hd F_\emptyset = \lbd F =  \ubd F_\emptyset = s.
\]
It is important to note that this holds much more generally, even when the pieces overlap in non-trivial ways.  On the other hand, it does not always hold, for example if there are repeated maps in the defining IFS or `exact overlaps' which emerge later in the construction.

The following theorem was obtained in \cite{fraserinhom}.  We give the full proof, which will hopefully serve as an instructive introduction to these type of arguments.

\begin{thm} \label{main}
Let $F_C$ be an inhomogeneous self-similar set with compact condensation set $C$ and similarity dimension $s$.  Then
\[
\max \{ \overline{\dim}_\text{\emph{B}} F_\emptyset,  \overline{\dim}_\text{\emph{B}} C\} \ \leq \ \overline{\dim}_\text{\emph{B}} F_C  \ \leq \ \max \{ s,  \overline{\dim}_\text{\emph{B}} C\}.
\]
\end{thm}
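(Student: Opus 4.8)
The plan is to treat the two inequalities separately, with essentially all of the work going into the upper bound. For the lower bound, note that $F_\emptyset \subseteq F_C$ and $C \subseteq F_C$ by the structure formula \eqref{structure}, so monotonicity of $\ubd$ gives $\ubd F_C \ge \max\{\ubd F_\emptyset, \ubd C\}$ immediately. For the upper bound I would run a covering argument at a fixed small scale $\delta>0$, organised around the orbital set $\mathcal{O}$. Write $r_i = \text{Lip}(S_i)$, and for a finite word $\mathbf{i}=(i_1,\dots,i_k)$ set $S_\mathbf{i}=S_{i_1}\circ\cdots\circ S_{i_k}$ and $r_\mathbf{i}=r_{i_1}\cdots r_{i_k}$. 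The key device is the $\delta$-stopping antichain $\mathcal{I}_\delta = \{\mathbf{i} : r_\mathbf{i}\le \delta < r_{\mathbf{i}^-}\}$ (where $\mathbf{i}^-$ deletes the last symbol) together with the companion set of ``surviving prefixes'' $\mathcal{P}_\delta = \{\mathbf{j} : r_\mathbf{j} > \delta\}$, which includes the empty word. Using $F_C=\overline{\mathcal{O}}$ and the observation that any copy $S_\mathbf{j}(C)$ with $r_\mathbf{j}\le\delta$ factors through a prefix $\mathbf{i}\in\mathcal{I}_\delta$ as $S_\mathbf{j}(C)\subseteq S_\mathbf{i}(F_C)$, one obtains the covering decomposition
\[
F_C \ \subseteq\ \bigcup_{\mathbf{i}\in\mathcal{I}_\delta} S_\mathbf{i}(F_C)\ \cup\ \overline{\bigcup_{\mathbf{j}\in\mathcal{P}_\delta} S_\mathbf{j}(C)}.
\]

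Two observations then control the two parts. First, because $\mathcal{I}_\delta$ is a maximal antichain, the Hutchinson--Moran identity \eqref{hutch} telescopes to $\sum_{\mathbf{i}\in\mathcal{I}_\delta} r_\mathbf{i}^s = 1$; since each $r_\mathbf{i} > r_{\min}\delta$, this yields $|\mathcal{I}_\delta|\lesssim \delta^{-s}$, and as each $S_\mathbf{i}(F_C)$ has diameter $\lesssim\delta$ it needs only $O(1)$ sets of diameter $\delta$, so the first union costs $\lesssim\delta^{-s}$. Second, because each $S_\mathbf{j}$ is a similarity of ratio $r_\mathbf{j}$, covering numbers scale exactly: $N_\delta(S_\mathbf{j}(C)) = N_{\delta/r_\mathbf{j}}(C)$. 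Fixing $\eps>0$ and inserting the bound $N_\rho(C)\lesssim_\eps \rho^{-(\ubd C+\eps)}$ reduces the second union to estimating the sum $\sum_{\mathbf{j}\in\mathcal{P}_\delta} r_\mathbf{j}^{\,\ubd C+\eps}$.

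The crux --- and the step I expect to be the main obstacle --- is bounding $\Sigma_u := \sum_{\mathbf{j}\in\mathcal{P}_\delta} r_\mathbf{j}^{u}$ for $u=\ubd C+\eps$ and converting it into the clean exponent $\max\{s,\ubd C\}$. A dichotomy on $u$ versus $s$ is forced here. If $u>s$ (which covers the regime $\ubd C\ge s$), then $\sum_i r_i^u<1$ and $\Sigma_u$ is dominated by the convergent geometric series $\sum_k(\sum_i r_i^u)^k$, so $\Sigma_u=O(1)$ and the second union costs $\lesssim_\eps \delta^{-(\ubd C+\eps)}$. If $u<s$ (the regime $\ubd C<s$), then $\Sigma_u$ diverges as a full sum, so one restricts to $\mathcal{P}_\delta$: using $r_\mathbf{j}^{u}=r_\mathbf{j}^{s}\,r_\mathbf{j}^{u-s}\le\delta^{u-s}r_\mathbf{j}^{s}$ together with the level-by-level identity $\sum_{|\mathbf{j}|=k} r_\mathbf{j}^{s}=1$ and the fact that words in $\mathcal{P}_\delta$ have length $\lesssim\log(1/\delta)$, one gets $\Sigma_u\lesssim \delta^{u-s}\log(1/\delta)$, so the second union costs $\lesssim\delta^{-s}\log(1/\delta)$.

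In either case $N_\delta(F_C)\lesssim_\eps \delta^{-(\max\{s,\ubd C\}+\eps)}$ once the harmless logarithmic factor is absorbed into $\delta^{-\eps}$; taking $\delta\to 0$ and then $\eps\to 0$ delivers $\ubd F_C\le\max\{s,\ubd C\}$. The delicate point to get right is precisely this logarithmic loss in the $\ubd C<s$ case: it is exactly what makes the \emph{upper} box dimension behave well, since the $\log$ is killed in the limit, while simultaneously signalling why the analogous statement for the \emph{lower} box dimension fails in general.
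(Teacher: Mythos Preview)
Your argument is correct and your decomposition is essentially the paper's: split the orbital set into the ``big'' copies $S_\mathbf{j}(C)$ with $r_\mathbf{j}>\delta$ (your $\mathcal{P}_\delta$) and absorb all ``small'' copies into the stopping pieces $S_\mathbf{i}(F_C)$, $\mathbf{i}\in\mathcal{I}_\delta$, whose cardinality is $\lesssim\delta^{-s}$ by the Hutchinson--Moran identity.

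The one place where you diverge from the paper is the choice of exponent for the big pieces, and it costs you the dichotomy and the logarithm. The paper fixes a single $t>\max\{s,\ubd C\}$ from the outset and uses $N_\rho(C)\lesssim\rho^{-t}$; since $t>s$, the full series $\sum_{\mathbf{j}\in\mathcal{I}^*}r_\mathbf{j}^{\,t}$ converges outright (equation \eqref{sumsize}), so the big-piece contribution is $\lesssim\delta^{-t}$ with no case split and no $\log(1/\delta)$. You instead set $u=\ubd C+\eps$, which can fall below $s$, forcing you to handle $\Sigma_u$ separately in that regime and pick up the $\log$ factor from counting levels. That works, but the log is an artefact of your bookkeeping rather than a genuine feature of the problem; in particular, your closing remark that this logarithm ``signals'' the failure for lower box dimension is a bit misleading, since the paper's argument produces the same upper bound with no $\log$ at all, and the lower-box pathology comes from oscillation in $N_\delta(C)$ across scales rather than from any level-counting loss.
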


\begin{proof}
The idea is to decompose the orbital set  into `small pieces' and `big pieces' (relative to a fixed covering scale $\delta>0$.  The big pieces must be covered individually and the key observation is that covering a scaled down copy of $C$ by a factor $r$ at scale $\delta$ is the same as covering $C$ at the inflated scale $\delta/r$.  On the other hand, the (infinitely many)  small pieces can be grouped together with one group for every piece which is roughly of size $\delta$.  Both of these covering strategies connect the covering number of the orbital set  to sums of contraction ratios and the covering number of $C$ itself.

Fix an IFS $\mathbb{I}=\{ S_{1}, \dots,  S_{N}\}$, where each $S_i$ is a similarity map, and  a compact condensation set  $C$.  Write $\mathcal{I} = \{1, \dots, N\}$ and $L_{\min} = \min_{i \in \mathcal{I}} \text{Lip}(S_i)$.  Let
\[
\mathcal{I}^* = \bigcup_{k \in \mathbb{N} } \mathcal{I}^k
\]
denote the set of all finite words over $\mathcal{I}$.  For $\textbf{i}  = (i_1, \dots, i_k) \in \mathcal{I}^*$, write $S_\textbf{i} = S_{i_1} \circ \cdots \circ S_{i_k}$ for the composition of maps associated to the finite word $\textbf{i}$.  Thus the `pieces' of the orbital set are the sets $S_{\textbf{i}}(C)$.  Further, write  $\textbf{i}_-  = (i_1, \dots, i_{k-1})$ and  $\lvert \textbf{i} \rvert = k$ to denote the length of the word $\textbf{i}$.  For $\delta \in (0,1]$, define a $\delta$-stopping, $\mathcal{I}(\delta)$, by
\[
\mathcal{I}(\delta) = \big\{ \textbf{i} \in \mathcal{I}^* : \text{Lip}(S_\textbf{i}) < \delta \leq \text{Lip}(S_{ \textbf{i}_-}) \big\},
\]
where we assume for convenience that $ \text{Lip}(S_{\omega}) = 1$, where $\omega$ is the empty word.  It is easy to see that, for all $\delta \in (0,1]$,
\begin{equation}\label{stopsize}
\delta^{-s} \ \leq \  \lvert  \mathcal{I}(\delta) \rvert  \ \leq \ L_{\min}^{-s} \, \delta^{-s}.
\end{equation}
Indeed, repeated application of Hutchinson's formula (\ref{hutch}) gives
\[
\sum_{\textbf{i} \in \mathcal{I}(\delta)} \text{Lip}(S_\textbf{i})^{s}=1
\]
from which we deduce
\[
1 = \sum_{\textbf{i} \in \mathcal{I}(\delta)} \text{Lip}(S_\textbf{i})^{s} \geq \sum_{\textbf{i} \in \mathcal{I}(\delta)} (\delta \, L_{\min})^s = \lvert \mathcal{I}(\delta) \rvert \,  (\delta \, L_{\min})^s 
\]
and
\[
1 = \sum_{\textbf{i} \in \mathcal{I}(\delta)} \textup{Lip}(S_\textbf{i})^{s} \leq \sum_{\textbf{i} \in \mathcal{I}(\delta)} \delta^s = \lvert \mathcal{I}(\delta) \rvert \,  \delta^s.
\]
Moreover, for all $t > s$,
\begin{equation}\label{sumsize}
\sum_{\textbf{i} \in \mathcal{I}^*} \text{Lip}(S_\textbf{i})^{t} = \sum_{k=1}^\infty \sum_{\textbf{i} \in \mathcal{I}^k} \text{Lip}(S_\textbf{i})^{t} = \sum_{k=1}^\infty \Bigg( \sum_{i \in \mathcal{I}} \text{Lip}(S_i)^{t} \Bigg)^k < \infty,
\end{equation}
by \eqref{hutch}.

By monotonicity of upper box dimension,  $\max \{ \overline{\dim}_\text{B} F_\emptyset,  \overline{\dim}_\text{B} C\} \ \leq \ \overline{\dim}_\text{B} F_C$ and so we prove the other inequality.  Since upper box dimension is finitely stable, it suffices to show that
\[
\overline{\dim}_\text{B} \mathcal{O} \leq \max\{s,  \overline{\dim}_\text{B} C\}.
\]
Let $t>\max\{s,  \overline{\dim}_\text{B} C\}$.  It follows from the definition of upper box dimension that 
\begin{equation} \label{boundy}
N_\delta(C) \lesssim \delta^{-t}
\end{equation}
for all $\delta \in (0,1]$, (with the implicit constant depending on $t$ but not $\delta$).  Also note that since $X$ is compact, the smallest number of balls of radius 1 required to cover $X$ is a finite constant $N_1(X)$.  

We are now ready to complete the proof.  For  $\delta \in (0,1]$, 
\begin{eqnarray*}
N_\delta(\mathcal{O}) &=& N_\delta \Bigg(  C \cup \bigcup_{\textbf{i} \in \mathcal{I}^*} S_{\textbf{i}}(C) \Bigg) \\ 
 &\leq& \sum_{\substack{\textbf{i} \in \mathcal{I}^*: \\ \\
\delta \leq \text{Lip}(S_\textbf{i})}} N_\delta \big( S_{\textbf{i}}(C) \big) \ +  \   N_\delta  \Bigg( \  \bigcup_{\substack{\textbf{i} \in \mathcal{I}^*: \\ \\
\delta> \text{Lip}(S_\textbf{i})}} S_{\textbf{i}}(C) \ \Bigg) \ + \ N_\delta(C)\\ 
&\leq& \sum_{\substack{\textbf{i} \in \mathcal{I}^*: \\ \\
\delta \leq \text{Lip}(S_\textbf{i})}} N_{\delta / \text{Lip}(S_\textbf{i})} (C) \ +  \   N_\delta  \Bigg( \  \bigcup_{\textbf{i} \in \mathcal{I}(\delta)} S_{\textbf{i}}(X) \ \Bigg) \ + \ N_\delta(C)\\ 
&\lesssim& \sum_{\substack{\textbf{i} \in \mathcal{I}^*: \\ \\
\delta \leq \text{Lip}(S_\textbf{i})}}\big(\delta / \text{Lip}(S_\textbf{i})\big)^{-t} \ +  \   \sum_{\textbf{i} \in \mathcal{I}(\delta)} N_{\delta / \text{Lip}(S_\textbf{i})}(X) \ + \  \delta^{-t} \qquad \text{by (\ref{boundy})}\\ 
&\leq&  \delta^{-t}  \sum_{\substack{\textbf{i} \in \mathcal{I}^*: \\ \\
\delta \leq \text{Lip}(S_\textbf{i})}} \text{Lip}(S_\textbf{i})^{t} \ +  \   N_1(X) \, \lvert  \mathcal{I}(\delta) \rvert \ + \  \delta^{-t}  \\ 
&\lesssim& \delta^{-t}  \, \sum_{\textbf{i} \in \mathcal{I}^*} \text{Lip}(S_\textbf{i})^{t} \ +  \   \delta^{-s} \ + \  \delta^{-t} \qquad \quad  \text{by  \eqref{stopsize}}\\ 
&\lesssim& \delta^{-t}
\end{eqnarray*}
by  \eqref{sumsize}.  This proves the upper bound and the theorem.
\end{proof}

In \cite[Corollary 2.6]{olseninhom} and \cite[Theorem 3.10 (2)]{ninaphd} it was proved that if each of the $S_i$ are similarities, and the sets $S_1 (F_C), \dots, S_N(F_C),  C$ are pairwise disjoint, then
\[
\overline{\dim}_\text{B} F_C = \max \{ \overline{\dim}_\text{B}  F_\emptyset,  \overline{\dim}_\text{B} C\}.
\]
This follows from Theorem \ref{main} above and the fact that, under this  strong separation condition,  
\begin{equation} \label{hutchygood}
\ubd F_\emptyset = s.
\end{equation}
However, \eqref{hutchygood} holds far more generally than this.  For example, the rather weaker \emph{open set condition} will suffice (see \cite{falconer}) but, moreover, the celebrated `exact overlaps conjecture' asserts that \eqref{hutchygood} holds for self-similar sets in the line provided $s \leq 1$ and there are no exact overlaps in the system; that is, the semigroup generated by the defining IFS is free, see \cite{peressolomyak}.  Whilst still open, this conjecture is known to hold  provided the contractions are algebraic and it also holds outside of a very small exceptional set for suitably parametrised IFSs.  We omit further discussion of this, but refer the reader to the extensive and impressive recent literature on this problem, including but not limited to \cite{hochman, rapaport}.  We state one simple corollary of Theorem \ref{main} in this direction, which combines Theorem \ref{main} with the main result in \cite{rapaport}.

\begin{cor} \label{maincor}
Let $F_C$ be an inhomogeneous self-similar set in $\mathbb{R}$.   Suppose the linear parts of the maps in the IFS are defined with algebraic parameters and that the semigroup generated by the IFS is free. Then \eqref{boxexpected} holds, that is, 
\[
\ubd F_C = \max \{ \overline{\dim}_\text{\emph{B}} F_\emptyset,  \overline{\dim}_\text{\emph{B}} C\}.
\]
\end{cor}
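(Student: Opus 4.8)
The plan is to combine the two-sided estimate already supplied by Theorem \ref{main} with the resolution of the exact overlaps conjecture in the algebraic, free setting from \cite{rapaport}. Theorem \ref{main} holds with no separation hypotheses and gives
\[
\max\{\ubd F_\emptyset, \ubd C\} \ \leq \ \ubd F_C \ \leq \ \max\{s, \ubd C\},
\]
so the lower bound in the asserted equality is automatic and only the upper bound $\ubd F_C \leq \max\{\ubd F_\emptyset,\ubd C\}$ needs work. The whole task therefore reduces to controlling the gap between $\max\{s,\ubd C\}$ and $\max\{\ubd F_\emptyset,\ubd C\}$, which is governed entirely by the value of $\ubd F_\emptyset$ relative to $s$.

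The central step is to show that, under the stated hypotheses, $\ubd F_\emptyset = \min\{s,1\}$. The upper bound $\ubd F_\emptyset \leq \min\{s,1\}$ is free: the inequality $\ubd F_\emptyset \leq s$ follows by covering $F_\emptyset$ with the $\delta$-stopping pieces $S_{\mathbf{i}}(X)$ for $\mathbf{i}\in\mathcal{I}(\delta)$ and applying \eqref{stopsize} (this is exactly the device used for the homogeneous part of the orbital set inside the proof of Theorem \ref{main}), while $\ubd F_\emptyset \leq 1$ is trivial since $F_\emptyset \subseteq \mathbb{R}$. For the matching lower bound I would invoke the main theorem of \cite{rapaport}: for a self-similar set in $\mathbb{R}$ whose linear parts are algebraic and generate a free semigroup, $\hd F_\emptyset = \min\{s,1\}$. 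Since $\hd \leq \ubd$ always, this pins down $\ubd F_\emptyset = \min\{s,1\}$.

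I would then finish with a short case split. If $s \leq 1$ then $\ubd F_\emptyset = s$, the two ends of the sandwich coincide, and $\ubd F_C = \max\{s,\ubd C\} = \max\{\ubd F_\emptyset,\ubd C\}$ at once. If instead $s > 1$ then $\ubd F_\emptyset = 1$, and here the upper bound $\max\{s,\ubd C\}$ from Theorem \ref{main} overshoots and must be discarded in favour of the ambient bound $\ubd F_C \leq 1$, valid because $F_C \subseteq \mathbb{R}$; combined with $\ubd C \leq 1$ and the monotonicity estimate $\ubd F_C \geq \ubd F_\emptyset = 1$, this forces $\ubd F_C = 1 = \max\{\ubd F_\emptyset,\ubd C\}$. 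There is no genuine analytic obstacle internal to the argument—the substance is outsourced to \cite{rapaport}—so the only point requiring care is precisely this $s>1$ regime: the statement \eqref{hutchygood} must be read as $\ubd F_\emptyset = \min\{s,1\}$ rather than $\ubd F_\emptyset = s$, since the latter is literally false once $s>1$, and I would treat the two cases explicitly to make the passage through the ambient dimension bound transparent.
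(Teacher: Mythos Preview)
Your proposal is correct and follows exactly the route the paper indicates: combine the sandwich from Theorem~\ref{main} with Rapaport's theorem \cite{rapaport} to identify $\ubd F_\emptyset$. The paper leaves the corollary as a one-line consequence without spelling out the $s>1$ case; your explicit split, using the ambient bound $\ubd F_C \leq 1$ when $s>1$, is the natural way to make that passage rigorous and is a welcome clarification rather than a departure from the intended argument.
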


\begin{figure}[H] \label{examples1}
	\centering
	\includegraphics[width=\textwidth]{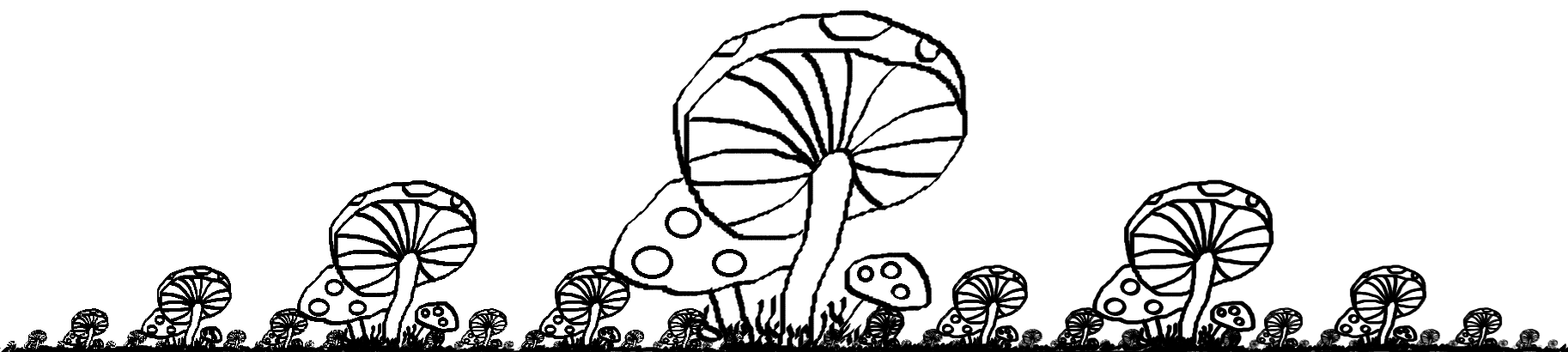}
\caption{Inhomogeneous self-similar mushroom patch. }
\end{figure}

\section{More general attractors}

Theorem \ref{main} provides a roadmap to study more general inhomogeneous attractors where there is a `best guess' for the dimension of the \emph{homogeneous} attractor.  For example, a similar result was proved in \cite{burrellaffine} for general self-affine sets with the $s$ given by the `affinity dimension' and a similar result was proved in \cite{burrell} for general self-conformal sets with the $s$ given by Bowen's formula; the unique zero of the associated topological pressure.  Similar to Theorem \ref{main} and Corollary \ref{maincor}, these results allow precise formulae to be deduced provided the `best guess' actually gives the upper box dimension of the homogeneous attractor. This is known to hold in many situations and we briefly recall some results of this type below.

For self-affine sets there is again a wealth of recent literature, building on earlier work of Falconer \cite{affine}, which says that the dimension of a self-affine set is given by the affinity dimension under only very mild assumptions; see \cite{affine2, solomyak, barany}.  However, different to the self-similar case, there are important classes of exceptional set where the dimension is smaller than the affinity dimension due to, for example, excessive alignment of cylinders; see Bedford--McMullen carpets \cite{bedford, mcmullen}.  A detailed study of inhomogeneous Bedford--McMullen carpets was provided by \cite{fraseraffine} and general families of sets where \eqref{boxexpected} fails were described.  We provide an explicit example later in Section \ref{carpetsection}.

For self-conformal sets, the appropriately formulated open set condition will suffice for the dimension of the homogeneous attractor to be given by the zero of the topological pressure; thus verifying \eqref{boxexpected}.  Such examples include inhomogeneous `cookie-cutters'.

Burrell \cite{burrell} also gave  bounds of the form provided by Theorem \ref{main} for general IFSs. In this case, the $s$ was given by the `upper Lipschitz dimension'.  In general this is a poor upper bound for the upper box dimension of the homogeneous attractor, but more can be said if the system satisfies some bounded distortion type estimates.

\section{Inhomogeneous self-similar sets with overlaps: number theoretic counterexamples} \label{resultssection}

Given the optimism provided by Theorem \ref{main} and Corollary \ref{maincor}, one might start to believe that  \eqref{boxexpected} \emph{always} holds for inhomogeneous self-similar sets.  In fact, it does not.  This was first proved in \cite{baker} and two distinct types of counterexample were given, both of which utilised the ambient space in a crucial way. The first, which we exhibit below, uses an IFS which is `trapped' in    a line but a condensation set which `releases' it into the plane.  The second is an example in $\mathbb{R}^3$ where one utilises the algebraic properties of $SO(3)$, specifically the existence of subgroups with the spectral gap property. For this family of (counter)examples, both the condensation set $C$ and the homogeneous attractor were  singletons, but the (lower) box dimension of the inhomogeneous self-similar set could be made arbitrarily close to $2$.  This latter construction is not possible in $\mathbb{R}^2$.  Given that neither  of these counterexamples   work in the line, this leaves what is our favourite open problem on inhomogeneous attractors; first stated as \cite[Conjecture 5.3]{baker}.

\begin{ques}
Consider an inhomogeneous self-similar set $F_C \subseteq \mathbb{R}$.  Is it necessarily  true that (\ref{boxexpected}) holds, that is,  
\[
\ubd F_C = \max\{ \ubd F_\emptyset, \ubd C\} ?
\]
\end{ques}

We now describe the first family of counterexamples provided by \cite{baker}.  The construction is based on Bernoulli convolutions.  Fix $\lambda \in (0,1)$, let $X = [0,1]^2$ and let $S_1, S_2 : X \to X$ be defined by
\[
S_1(x) = \lambda x \ \ \text{ and } \ \ S_2(x) = \lambda x + (1-\lambda,0).
\]
To the homogeneous IFS $\{S_1, S_2\}$, associate the condensation set
\[
C = \{0\}\times [0,1]
\]
and observe that $F_\emptyset = [0,1] \times \{0\}$ and so $\dim_\text{B} F_\emptyset = \dim_\text{B} C = 1$.  We will denote the inhomogeneous attractor of this system by $F_C^\lambda$ to emphasise the dependence on $\lambda$. 

In order to effectively release the dimension of the IFS into the plane, we need to use number theoretic properties of $\lambda$.  In particular, it must be chosen so that the orbital set spreads out the images of $C$ as quickly and uniformly as possible.  This is most dramatically achieved by   a well known class of algebraic integers known as \emph{Garsia numbers}. Recall that a Garsia number is  a positive real algebraic integer with norm $\pm 2$, whose conjugates are all of modulus strictly greater than $1.$ Examples of Garsia numbers include $\sqrt[n]{2}$  for (integers $n \geq 2$) and $1.76929\ldots$; the appropriate root of $x^{3}-2x-2=0.$ In \cite{Gar} Garsia proved  that if $\lambda$ is the reciprocal of a Garsia number, then  the associated Bernoulli convolution is absolutely continuous with a bounded density.  This `smoothness' property of the Bernoulli convolution  is another manifestation of the the `spreading out power' of  Garsia numbers. In the other direction, \emph{Pisot numbers} are the polar opposite of Garsia numbers. They lead to excessive piling up of copies of $C$ and singular Bernoulli convolutions.

\begin{figure}[H] \label{examples}
	\centering
	\includegraphics[width=0.9\textwidth]{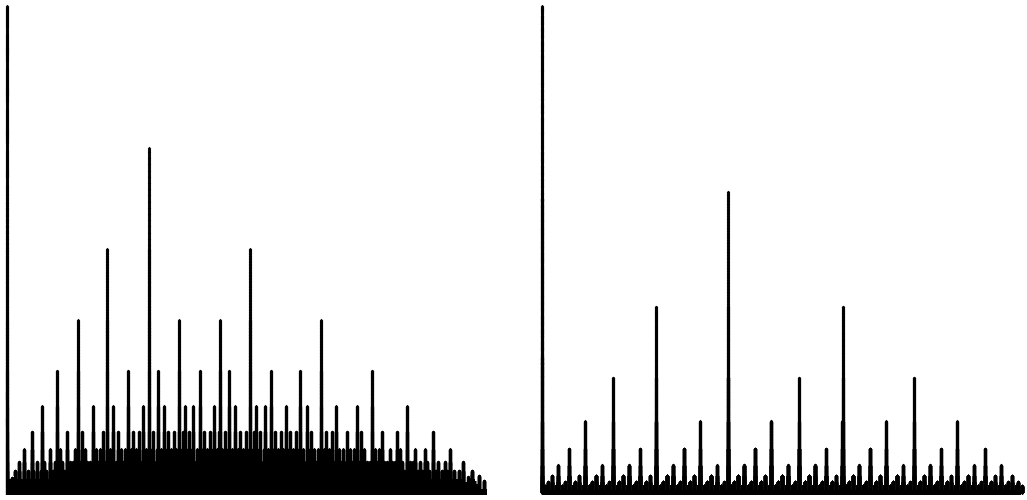}
\caption{Two  plots of $F_C^\lambda$ where $\lambda$ is chosen to be the reciprocal of $\sqrt{2}$, which is a Garsia number (left); and the reciprocal of the golden mean, which is Pisot (right). You can see that the images of $C$ are more uniformly spread out in the example on the left.}
\end{figure}

\begin{thm} \label{thmbernoulli}
If $\lambda\in(1/2,1)$ is the reciprocal of a Garsia number, then
\[
\dim_\text{\emph{B}} F_C^\lambda \ = \  \frac{\log( 4\lambda)}{\log2}  \ > \ 1.
\]
\end{thm}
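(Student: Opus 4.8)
The plan is to exploit the very rigid geometry of $F_C^\lambda$. Writing $\epsilon_j(\mathbf{i}) = 1$ if $i_j = 2$ and $0$ if $i_j = 1$, a direct computation gives $S_\mathbf{i}(x) = \lambda^{|\mathbf{i}|}x + (x_\mathbf{i},0)$ with $x_\mathbf{i} = (1-\lambda)\sum_{j=1}^{|\mathbf{i}|}\epsilon_j(\mathbf{i})\lambda^{j-1}$, so every piece $S_\mathbf{i}(C) = \{x_\mathbf{i}\}\times[0,\lambda^{|\mathbf{i}|}]$ is a vertical segment anchored on the $x$-axis, of height $\lambda^{|\mathbf{i}|}$ at horizontal position $x_\mathbf{i}$. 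Thus $\mathcal{O}$ is a `comb' of vertical segments and, since all translations are horizontal, the positions $x_\mathbf{i}$ are exactly the partial sums appearing in the Bernoulli convolution with parameter $\lambda$. The similarity dimension solves $2\lambda^s = 1$, so $s = \log 2/\log(1/\lambda) > 1$ (as $\lambda > 1/2$), and one checks that the target exponent is $d := \log(4\lambda)/\log 2 = 2 - 1/s$, which satisfies $1 < d < s$. In particular Theorem \ref{main} only yields $\ubd F_C^\lambda \le s$, which is strictly too big: the whole point is that the Garsia condition forces the pieces to spread out enough to pull the dimension down to $d$.

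The number-theoretic heart is the following separation estimate, which I would isolate as a lemma: if $\beta = 1/\lambda$ is a Garsia number then there is $c>0$ such that, for every $m$, the $2^m$ positions $\{x_\mathbf{i} : |\mathbf{i}| = m\}$ are pairwise distinct and pairwise at distance at least $c\,2^{-m}$. Distinctness amounts to showing that no nonzero polynomial with coefficients in $\{-1,0,1\}$ vanishes at $\beta$; this follows because the constant term of the monic minimal polynomial of $\beta$ is $\pm\mathrm{Nm}(\beta) = \pm 2$, so by Gauss's lemma any such vanishing polynomial $P$ would factor as $P = \mu_\beta Q$ in $\mathbb{Z}[x]$ with $P(0)$ divisible by $2$, forcing $P(0) = 0$ and, after repeatedly factoring out $x$, a contradiction. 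The quantitative bound comes from the same circle of ideas: for a nonzero $P$ with coefficients in $\{-1,0,1\}$ and degree $<m$, the algebraic integer $P(\beta)$ has $|\mathrm{Nm}(P(\beta))| \ge 1$, and bounding $|P(\beta_i)|$ at the conjugates (all of modulus $>1$, with $\prod_i|\beta_i| = 2$) gives $|P(\beta)| \gtrsim (\beta/2)^m$; multiplying through by $\lambda^{m-1}$ converts this into the separation $\gtrsim 2^{-m}$ for the $x_\mathbf{i}$. Combined with distinctness, this says the generation-$m$ positions form $2^m$ points spread through $[0,1]$ with gaps comparable to the uniform spacing $2^{-m}$.

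With this in hand the upper bound is bookkeeping. Fix $\delta$ and cover $\mathcal{O}$ by vertical strips of width $\delta$; in each strip the set is $\{(\cdot,y): 0\le y\le \lambda^{m^\ast}\}$, where $m^\ast$ is the smallest generation meeting that strip, so the strip needs $\approx \lambda^{m^\ast}/\delta$ boxes. Since the distinct generation-$\le m$ positions are exactly the nested set of $2^m$ points above, the separation estimate bounds the number of strips they meet by $\lesssim \min(2^m,\delta^{-1})$, and a short Abel summation over $m$ (using $2\lambda>1$, so the sum is governed by its top term near $m\approx \log_2(1/\delta)$, where a segment has height $\approx \delta^{1/s}$) yields $N_\delta(\mathcal{O}) \lesssim \delta^{-1}\cdot\delta^{1/s}\cdot\delta^{-1} = \delta^{-d}$; the pieces of height $<\delta$ are harmless, since each such $x_\mathbf{i}$ lies within $O(\delta)$ of a lower-generation position and is absorbed into the strips already counted. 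As $F_C^\lambda = \overline{\mathcal{O}}$ and box dimension is closure-stable, this gives $\ubd F_C^\lambda \le d$.

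For the matching lower bound I would choose $m_0 = \lfloor \log_2(1/\delta)\rfloor - O(1)$ so that the $2^{m_0}\approx \delta^{-1}$ distinct generation-$m_0$ segments are at least $2\delta$ apart, hence lie in disjoint strips; each has height $\ge \lambda^{m_0}\approx \delta^{1/s}$ and so meets $\gtrsim \delta^{1/s}/\delta$ disjoint $\delta$-boxes, giving $N_\delta(F_C^\lambda) \gtrsim \delta^{-1}\cdot\delta^{1/s-1} = \delta^{-d}$ and hence $\lbd F_C^\lambda \ge d$. The two bounds show the box dimension exists and equals $d = \log(4\lambda)/\log 2 > 1$, the strict inequality being immediate from $4\lambda > 2$. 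I expect the genuine obstacle to be the separation lemma: both its qualitative part (that the Garsia condition prevents the $2^m$ pieces from collapsing onto fewer positions) and its quantitative part (the clean exponent $2^{-m}$) are exactly what distinguishes Garsia $\lambda$ from, say, Pisot $\lambda$, where overlaps proliferate and the dimension drops below $d$; the remaining covering combinatorics is robust.
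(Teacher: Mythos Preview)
Your proposal is correct and follows essentially the same route as the paper. The paper also isolates Garsia's separation estimate (stated as a lemma, cited rather than proved) and then performs the same geometric count; the only cosmetic difference is that the paper slices the unit square into \emph{horizontal} strips $[0,1]\times(\lambda^{k+1},\lambda^k]$ and counts contributing vertical lines per strip, whereas you slice into vertical $\delta$-strips and record the minimal generation $m^\ast$ in each---the two organisations are related by a summation by parts and lead to the identical sum $\sum_k (\lambda^k/\delta)\min\{2^k,\delta^{-1}\}$, which is then evaluated exactly as you indicate.
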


 For every $\lambda\in(1/2,1)$ which is the reciprocal of a Garsia number, the set $F_C^\lambda$ thus provides a counterexample to (\ref{boxexpected}) for the upper (and lower) box dimension.  Furthermore,  this example  shows that $\overline{\dim}_\text{B} F_C^\lambda$ does not  just depend on the sets $F_\emptyset$ and $C$, but also depends on the IFS itself.  To see this, simply observe that $F_\emptyset$ and $C$ do not depend on $\lambda$, but $\overline{\dim}_\text{B} F_C^\lambda$ does.

Before we get to the proof we state a useful separation property that holds for the reciprocals of Garsia numbers and demonstrate the relevance to our situation via (\ref{S(C) equation}) below.

\begin{lma}[Garsia \cite{Gar}]
\label{Garsia's lemma}
Let $\lambda\in(1/2,1)$ be the reciprocal of a Garsia number and $(i_{k})_{k=1}^{n},(i_{k}')_{k=1}^{n}\in\{1,2\}^{n}$ be distinct words of length $n$. Then $$\Big|(1-\lambda)\sum_{k=1}^{n}i_{k}\lambda^{k-1}-(1-\lambda)\sum_{k=1}^{n}i_{k}'\lambda^{k-1}\Big|> \frac{K}{2^{n}}$$ for some strictly positive constant $K$ that only depends on $\lambda.$
\end{lma}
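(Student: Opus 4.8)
The plan is to translate the statement about pairs of words into a lower bound for a single integer polynomial evaluated at the Garsia number, and then to exploit the two defining features of such a number: its algebraic norm is $\pm 2$, and all its conjugates lie strictly outside the unit disc. First I would set $\epsilon_k = i_k - i_k' \in \{-1,0,1\}$, which are not all zero because the words are distinct, so that the quantity to be bounded below is $(1-\lambda)\,\lvert \sum_{k=1}^n \epsilon_k \lambda^{k-1}\rvert$. Writing $\beta = 1/\lambda$ for the Garsia number itself and $P(x) = \sum_{k=1}^n \epsilon_k x^{n-k}$, a nonzero polynomial with coefficients in $\{-1,0,1\}$ and $\deg P \le n-1$, a one-line computation using $\beta^{n-k} = \lambda^{k-n}$ gives $\sum_{k=1}^n \epsilon_k \lambda^{k-1} = \lambda^{n-1} P(\beta)$. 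It therefore suffices to bound $\lvert P(\beta)\rvert$ below by a constant multiple of $(2\lambda)^{-n}$.

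The engine of the argument is an algebraic-integer observation. Let $\beta = \beta_1, \dots, \beta_d$ be the conjugates of $\beta$, so that $\lvert\prod_{i=1}^d \beta_i\rvert = 2$ and $\lvert\beta_i\rvert > 1$ for every $i$. Because $P$ has integer coefficients and each $\beta_i$ is an algebraic integer, the product $\prod_{i=1}^d P(\beta_i)$ is the field norm $N_{\mathbb{Q}(\beta)/\mathbb{Q}}(P(\beta))$, hence a rational integer; provided it is nonzero it has absolute value at least $1$. I can then isolate the factor of interest via $\lvert P(\beta)\rvert \ge \big(\prod_{i=2}^d \lvert P(\beta_i)\rvert\big)^{-1}$, and control the conjugate factors crudely: since $\lvert\beta_i\rvert > 1$, summing the geometric series gives $\lvert P(\beta_i)\rvert \le \sum_{m=0}^{n-1} \lvert\beta_i\rvert^m < \lvert\beta_i\rvert^n/(\lvert\beta_i\rvert-1)$, so $\prod_{i=2}^d \lvert P(\beta_i)\rvert < D \prod_{i=2}^d \lvert\beta_i\rvert^n$ with $D = \prod_{i=2}^d (\lvert\beta_i\rvert-1)^{-1}$ depending only on $\beta$. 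Using $\prod_{i=1}^d\lvert\beta_i\rvert = 2$ to get $\prod_{i=2}^d\lvert\beta_i\rvert = 2\lambda$, this yields $\lvert P(\beta)\rvert > D^{-1}(2\lambda)^{-n}$. Feeding this back and simplifying $\lambda^{n-1}/(2\lambda)^n = \beta/2^n$ produces the claimed bound with $K = (1-\lambda)\beta/D$.

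The step I expect to be the only genuine subtlety---and the one place where the norm being $\pm 2$ is used rather than just the conjugate condition---is verifying that the norm does not vanish, i.e.\ that $P(\beta) \neq 0$. If $P(\beta) = 0$, then the minimal polynomial $m$ of $\beta$ divides $P$ in $\mathbb{Z}[x]$ by Gauss's lemma (as $m$ is monic), say $P = mQ$ with $Q \in \mathbb{Z}[x]$; in particular the constant term satisfies $c_0 = P(0) = m(0)Q(0) = \pm 2\,Q(0)$, and since $c_0 \in \{-1,0,1\}$ this forces $c_0 = 0$. Dividing through by $x$ (legitimate since $\beta \neq 0$) leaves a polynomial of the same type still vanishing at $\beta$, and iterating drives every coefficient to zero, contradicting $P \not\equiv 0$. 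The same divisibility shows $P(\beta_i) \neq 0$ for every $i$, so the full product is a nonzero integer as required, and the strict inequality in the conclusion is inherited directly from the strict geometric-series estimate. The remaining manipulations are the routine estimates indicated above.
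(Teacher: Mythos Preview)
Your argument is correct and is essentially the classical proof of Garsia's separation lemma. Note, however, that the paper does not supply its own proof of this statement: the lemma is quoted directly from Garsia's 1962 paper and used as a black box in the proof of Theorem~\ref{thmbernoulli}. So there is nothing in the paper to compare against beyond the citation.

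Your write-up reconstructs the standard route: reduce to a nonzero $\{-1,0,1\}$-polynomial $P$ evaluated at the Garsia number $\beta=1/\lambda$, use that the field norm $\prod_i P(\beta_i)$ is a nonzero rational integer, bound the conjugate factors by geometric series using $\lvert\beta_i\rvert>1$, and invoke $\prod_i\lvert\beta_i\rvert=2$ to convert $\prod_{i\ge 2}\lvert\beta_i\rvert^n$ into $(2\lambda)^n$. The nonvanishing step via divisibility by the minimal polynomial and the constant-term obstruction $m(0)=\pm 2$ is handled cleanly. One small remark: your iterated ``divide by $x$'' argument is fine, but you could shortcut it by noting that once $m\mid P$ in $\mathbb{Z}[x]$ and $P(0)=0$, factoring $P=xP_1$ eventually drops the degree below $\deg m$, forcing the remaining polynomial to vanish identically; either phrasing works. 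The strictness of the final inequality is correctly traced to the strict geometric-series bound.
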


Observe that, for any $\I=(i_{1},\ldots,i_{n})\in\{1,2\}^{n}$, 
\begin{equation}\label{S(C) equation}
S_{\I}(C)=\{S_{\I}(0,0)\} \times  [0,\lambda^{n}] =  \left\{ (1-\lambda)\sum_{k=1}^{n}i_k\lambda^{k-1}\right\} \times [0,\lambda^{n}].
\end{equation} Combining Lemma \ref{Garsia's lemma} with (\ref{S(C) equation}), we see that whenever $\lambda$ is the reciprocal of a Garsia number the images of $C$ will be separated by a factor $K\cdot2^{-n}$. This property is the main tool we use in the proof of Theorem \ref{thmbernoulli}.

\begin{proof}[Proof of Theorem \ref{thmbernoulli}]
 Fix $\delta>0$ and partition the unit square into horizontal strips of the form $[0,1] \times (\lambda^{k+1}, \lambda^k]$ for $k$ ranging from 0 to $k(\lambda, \delta)$, defined to be the largest integer satisfying $\lambda^{k(\lambda, \delta)+1} > \delta$.  Observe that the only part of $F_C^\lambda$ which intersect the interior of the $k$th horizontal strip is
\[
\bigcup_{l=0}^{k} \ \bigcup_{\I \in \{1,2\}^l} S_{\I}(C)
\]
which is a union of vertical lines.  Within the $k$th horizontal strip, each vertical line $S_{\I}(C)$ appearing in the above expression intersects roughly  $\lambda^k/\delta$ many  squares from the $\delta$-mesh.  However, if two of these lines are too close to each another, they may not both contribute to the total intersections for $F_C^\lambda$.  In fact, the number of lines which make a contribution to the total intersections is roughly  $N_\delta( \Lambda(k) )$, where
\[
\Lambda(k) \ = \ \bigcup_{l=0}^{k} \ \bigcup_{\I \in \{1,2\}^l} S_{\I}(0,0),
\]
that is, the number of base points of the lines intersecting the $k$th horizontal  strip which lie in different $\delta$-intervals.  This yields
\begin{eqnarray*}
N_\delta\big( F_C^\lambda \big) &\approx& \delta^{-1} \ + \ \sum_{k=0}^{k(\lambda, \delta)} \big(\lambda^k/\delta \big) \, N_\delta( \Lambda(k) )
\end{eqnarray*}
where the $\delta^{-1}$ comes from the intersections below the $k(\lambda, \delta)$th horizontal strip.  It follows from Lemma \ref{Garsia's lemma} and subsequent discussion that
\[
N_\delta( \Lambda(k) )  \ \approx \  \min\{2^k,\delta^{-1}\}
\]
which is the maximum value possible and where the `comparison constants' are independent of $\delta$ and $k$, but do depend on $\lambda$, which is fixed.  Let $k_0(\delta)$ be the largest integer satisfying $2^{k_0(\delta)}<\delta^{-1}$.  It follows that
\begin{eqnarray*}
N_\delta\big( F_C^\lambda \big) &\approx& \delta^{-1} \ + \ \sum_{k=0}^{k_0(\delta)} \big(\lambda^k/\delta \big) \, 2^k   \ + \ \sum_{k=k_0(\delta)+1}^{k(\lambda, \delta)} \big(\lambda^k/\delta \big) \, \delta^{-1} \\ \\
&=& \delta^{-1} \ + \ \delta^{-1} \, \sum_{k=0}^{k_0(\delta)}(2\lambda)^k   \ + \ \delta^{-2} \, \sum_{k=k_0(\delta)+1}^{k(\lambda, \delta)} \lambda^k \\ \\
&\approx&\delta^{-1} \ + \  \delta^{-1} \, (2\lambda)^{k_0( \delta)} \ + \ \delta^{-2}\big(\lambda^{k(\lambda,\delta)} \, - \, \lambda^{k_0(\delta)} \big) \\ \\
&\approx&\delta^{-1} \ + \  \delta^{-1} \,\delta^{-\log(2\lambda)/\log2} \ + \ \delta^{-2}\big(\delta \, - \, \delta^{-\log\lambda/\log2} \big) \\ \\
&\approx&  \delta^{-1 -\log(2\lambda)/\log2}
\end{eqnarray*}
which yields
\[
\overline{\dim}_\text{B} F_C^\lambda \ = \ \underline{\dim}_\text{B} F_C^\lambda \ = \ 1 + \log(2\lambda)/\log2 \ = \  \frac{\log( 4\lambda)}{\log2}
\]
as required.
\end{proof}

The key reason that the sets $F_C^\lambda$ provide counterexamples to  (\ref{boxexpected}) is that the set $F_\emptyset$ is trapped in a proper subspace of the plane.  The underlying IFS has potential to give rise to an attractor with dimension bigger than 1, but cannot because the attractor is forced to lie in a 1-dimensional line.  However, since the condensation set does not lie in this subspace, it `releases' some of this potential dimension.

\section{Inhomogeneous self-affine sets: fractal combs and counterexamples} \label{carpetsection}

We give a construction of an inhomogeneous Bedford--McMullen carpet, which we refer to as an \emph{inhomogeneous fractal comb} which exhibits some interesting properties; in particular, failure of \eqref{boxexpected}. These examples were first constructed in \cite{fraseraffine}. The underlying homogeneous IFS is a Bedford--McMullen construction where the unit square has been divided into 2 columns of width $1/2$, and $n > 2$ rows of height $1/n$.  The IFS is then made up of all the maps which correspond to the left hand column.  The condensation set for this construction is taken as $C = [0,1] \times \{0\}$, i.e. the base of the unit square and the \emph{homogeneous} attractor is $\{0\} \times [0,1]$, that is, the left hand side of the unit square.  In particular, $\max \{ \overline{\dim}_\text{B} F_\emptyset,  \overline{\dim}_\text{B} C \} \  = \ 1$.   The inhomogeneous attractor is termed the inhomogeneous fractal comb and is denoted by $F_C^n$, to emphasise the dependence on $n$.

\begin{thm} \label{affinethm}
For $n \geq 3$,
\[
\lbd F_C^n \ = \ \ubd  F_C^n  \ = \ 2-\log 2/ \log n \ > \ 1.
\]
\end{thm}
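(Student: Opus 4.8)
The plan is to compute the covering number $N_\delta(F_C^n)$ directly and uniformly in $\delta$, and read off both box dimensions at once, in the spirit of the proof of Theorem \ref{thmbernoulli}. First I would record the elementary geometry of the teeth of the comb. Labelling the $n$ maps so that $S_i(x,y) = (x/2,\, (y+i-1)/n)$, a word $\mathbf{i} = (i_1,\dots,i_k)$ gives a map $S_{\mathbf{i}}$ with horizontal contraction $2^{-k}$, vertical contraction $n^{-k}$, and \emph{no} horizontal translation, so that
\[
S_{\mathbf{i}}(C) = [0, 2^{-k}] \times \{ y_{\mathbf{i}} \}, \qquad y_{\mathbf{i}} = \sum_{j=1}^{k} (i_j - 1)\, n^{-j}.
\]
The crucial structural observation is that, writing the digits $d_j = i_j - 1 \in \{0,\dots,n-1\}$, the height $y_{\mathbf{i}}$ is exactly a base-$n$ expansion: as $\mathbf{i}$ ranges over $\{1,\dots,n\}^k$, the heights $y_{\mathbf{i}}$ range bijectively over the $n^k$ equally spaced points $\{ m\, n^{-k} : 0 \le m < n^k\}$. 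Thus at level $k$ the teeth are perfectly equidistributed with vertical gap $n^{-k}$, all sharing the left endpoint $x = 0$ (which together trace out the spine $F_\emptyset = \{0\}\times[0,1]$). This equidistribution is automatic from the base-$n$ structure, which is precisely what makes the self-affine comb cleaner than the Bernoulli-convolution example: no number-theoretic separation lemma is needed.

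Next I would set up the strip decomposition. Fix $\delta$ and choose $k_0 = k_0(\delta)$ with $n^{-k_0} \approx \delta$, so that $k_0 \approx \log(1/\delta)/\log n$ and $1/\delta \approx n^{k_0}$. Partition $[0,1]$ on the $y$-axis into $\approx 1/\delta$ horizontal strips of height $\delta$. The key simplification, which I would isolate as a short lemma, is that since every tooth starts at $x=0$, the horizontal footprint of $F_C^n$ inside a strip equals $[0, 2^{-k_{\min}}]$, where $k_{\min}$ is the smallest level of any tooth whose height lies in that strip; hence the number of $\delta$-columns the set meets in that strip is $\approx \max\{1, 2^{-k_{\min}}/\delta\}$ and is governed entirely by the \emph{longest} tooth present. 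Using the equidistribution, for every $k \ge k_0$ each strip contains a level-$k$ tooth, so $k_{\min} \le k_0$ in all strips; and for $k \le k_0$ one has $2^{-k} \ge 2^{-k_0} > n^{-k_0} \approx \delta$ (here $n > 2$), so every such tooth is ``long''. Counting strips by minimal level, the level-$k$ teeth occupy $\approx n^k$ strips, so the number of strips with $k_{\min} = k$ is $\approx n^k$ for $1 \le k \le k_0$; and $\approx n^{k_0}$ of them have $k_{\min} = k_0$, since the strips occupied by lower levels number at most $\sum_{k<k_0} n^k \lesssim n^{k_0}/(n-1) \le n^{k_0}/2$, leaving at least half when $n \ge 3$.

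Putting the pieces together gives
\[
N_\delta(F_C^n) \ \approx \ \frac{1}{\delta} \sum_{k=0}^{k_0} n^k \, 2^{-k} \ = \ \frac{1}{\delta}\sum_{k=0}^{k_0} (n/2)^k \ \approx \ \frac{1}{\delta}\, (n/2)^{k_0} \ \approx \ (n^2/2)^{k_0},
\]
where the geometric sum collapses to its top term precisely because $n/2 > 1$ (again using $n \ge 3$); the lower bound comes just from the $\approx n^{k_0}$ strips at minimal level $k_0$, each meeting $\approx (n/2)^{k_0}$ columns, and the upper bound from summing all levels and absorbing the teeth of level $>k_0$, which are shorter than $2^{-k_0}$ and so contribute no new columns. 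Since $k_0 \approx \log(1/\delta)/\log n$, this reads $N_\delta(F_C^n) \approx \delta^{-(2 - \log 2/\log n)}$ with implied constants independent of $\delta$, whence the $\limsup$ and $\liminf$ coincide and
\[
\lbd F_C^n = \ubd F_C^n = \frac{\log(n^2/2)}{\log n} = 2 - \frac{\log 2}{\log n},
\]
which exceeds $1$ exactly because $n \ge 3$. As a sanity check on why this is a genuine counterexample, the affinity dimension of the IFS (singular values $1/2 > 1/n$) solves $n\cdot \tfrac12 n^{-(s-1)} = 1$, giving $s = 2-\log 2/\log n$; the upper bound therefore also follows from the self-affine analogue of Theorem \ref{main} in \cite{burrellaffine}, yet $\ubd F_\emptyset = 1 < s$ because all cylinders are stacked in one column, so \eqref{boxexpected} fails.

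The main obstacle is the bookkeeping in the strip count: correctly arguing that only the longest tooth in each strip matters, quantifying how $k_{\min}$ is distributed across the $\approx 1/\delta$ strips, and verifying that both the upper and lower estimates carry $\delta$-independent constants, which is what pins down the lower box dimension (not merely the upper). Everything else, including the otherwise delicate ``spreading out'' of the images of $C$, is handed to us for free by the base-$n$ equidistribution.
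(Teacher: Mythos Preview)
Your proposal is correct and follows essentially the same approach as the paper: choose $k_0$ with $n^{-k_0}\approx\delta$, use the (automatic) base-$n$ equidistribution of the teeth to see that level-$k$ images of $C$ are $n^{-k}$-separated, and compute $N_\delta(F_C^n)\approx \delta^{-1}\sum_{k\le k_0}(n/2)^k\approx \delta^{-1}(n/2)^{k_0}$. You are more careful than the paper in justifying the lower bound via the ``longest tooth in each strip'' bookkeeping, and the closing remark that the result equals the affinity dimension (so the self-affine analogue of Theorem~\ref{main} already gives the upper bound) is a nice addition not present in the paper.
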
 

For every $n \geq 3$, the set $F_C^n$ thus provides a counterexample to (\ref{boxexpected}) for the upper (and lower) box dimension.   Notable here is that the underlying IFS satisfies the open set condition; this is not possible for inhomogeneous self-similar sets by Theorem \ref{main}.  A minor modification of the construction of $F_C^n$ also permits failure of \eqref{boxexpected} under the strong separation condition.  We leave the details to the reader.

Furthermore,  this example again shows that $\overline{\dim}_\text{B} F_C^n$ does not just depend on the sets $F_\emptyset$ and $C$, but also depends on the IFS itself.  To see this, observe that $F_\emptyset$ and $C$ do not depend on $n$, but $\overline{\dim}_\text{B} F_C^n$ does.  Finally, observe that, although the inhomogeneous fractal combs are subsets of $\mathbb{R}^2$ and the expected box dimension is 1, we can find examples where the achieved box dimension is arbitrarily close to 2 by letting $n \to \infty$.  This demonstrates that, even in this simple case, there is no limit to how `badly' the relationship (\ref{boxexpected}) can fail.
\begin{figure}[H]
	\centering
	\includegraphics[width=0.9\textwidth]{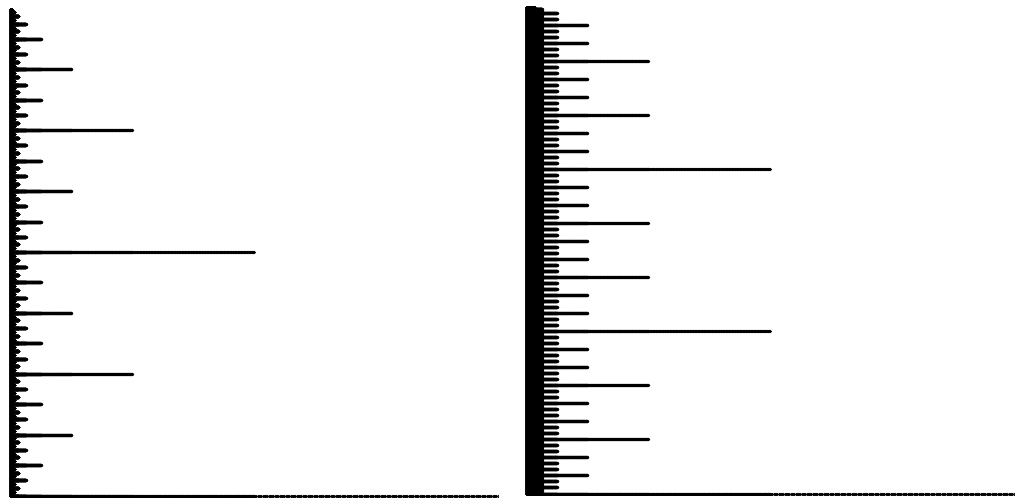}
	\caption{Two fractal combs: the inhomogeneous fractal combs $F_C^2$, with box dimension $1$ (left); and $F_C^3$, with box dimension $2-\log2/\log3>1$ (right).}
\end{figure}

Theorem \ref{affinethm} follows from a special case of the main result in \cite{fraseraffine} but we give a simple direct proof here.

\emph{Proof of Theorem \ref{affinethm}.} We again write  $\mathcal{I} = \{1, \dots, N\}$ for the set of indices describing the maps in the defining IFS and $\mathcal{I}^{k}$ for the set of words of length $k$ over $\mathcal{I}$.  In this particular case $N=n$. 

Let $\delta>0$ be a small scale and choose $m(\delta)$ to be the largest integer value of $m$ such that 
\[
\delta < n^{-m}.
\]
We choose $m(\delta)$ in this way so that the heights of the cylinders $S_\textbf{i}([0,1]^2)$ are roughly $\delta$ for $\textbf{i} \in \mathcal{I}^{m(\delta)}$.   The choice of $m(\delta)$ also means that the vertical separation of the $m(\delta)$th level images of $C$ is also roughly $\delta$. This means that  we may build an efficient cover of $F_C^n$ by treating images of $C$ up to the  $m(\delta)$th level   independently.  Covering such an image of $C$ at scale $\delta$ is simply the length of the line divided by $\delta$ and so we get
\begin{align*}
N_\delta(F_C^n)  &\approx \sum_{k=1}^{m(\delta)} \#\mathcal{I}^k\,  2^{-k}/\delta\\
&= \delta^{-1}\sum_{k=1}^{m(\delta)} (n/2)^{k}\\
&\approx \delta^{-1} (n/2)^{m(\delta)} \\
&\approx \delta^{-1} \delta^{-1+\log 2/\log n} 
\end{align*}
which proves that $\bd F_C^n = 2-\log 2/ \log n$ as required.
\hfill \qed

\section{Kleinian orbital sets: the original inhomogeneous attractors?}

Let  $n \geq 2$ be an integer and consider the Poincar\'e ball 
\[
\mathbb{D}^n = \{ z \in \mathbb{R}^n: |z| <1\}
\]
equipped with the hyperbolic metric $d $ given by
\[
|ds| = \frac{2|dz|}{1-|z|^2}.
\]
This provides a model of $n$-dimensional hyperbolic space.  The group of orientation preserving isometries of $(\mathbb{D}^n, d )$ is the group of conformal automorphisms of $\mathbb{D}^n$, which we denote by $\conp$.  A group $\Gamma \leq  \conp$ is called \emph{Kleinian} if it is a discrete   subset of $\conp$.  Kleinian groups generate fractal limit sets living on the boundary $S^{n-1}=\partial \mathbb{D}^n$ as well as  beautiful tessellations of hyperbolic space.  Both of these objects are defined via \emph{orbits} under the action of the Kleinian group.   The \emph{limit set} is defined by
\[
L(\Gamma) = \overline{\Gamma(0)} \setminus \Gamma(0)
\]
where $\Gamma(0) = \{ g(0) : g \in \Gamma\}$ is the orbit of 0 under $\Gamma$ and $\overline{\Gamma(0)}$ is the   Euclidean closure of $\Gamma(0)$.  On the other hand, hyperbolic  tessellations arise by taking the orbit of a fundamental domain for the group action.

The \emph{Poincar\'e exponent} is a coarse measure of  the rate of accumulation to the boundary.  It is defined as the exponent of convergence of the \emph{Poincar\'e series} 
\[
P_\Gamma(s) = \sum_{g \in \Gamma } \exp(-sd (0,g(0))) = \sum_{g \in \Gamma} \left(\frac{1-|g(0)|}{1+|g(0)|} \right)^s
\]
for $s \geq 0$.  That is, the  \emph{Poincar\'e exponent} is
\[
\delta(\Gamma) = \inf\{ s \geq 0 : P_\Gamma(s) <\infty\}.
\]
A Kleinian group is called \emph{non-elementary} if its limit set contains at least 3 points, in which case it is necessarily an uncountable perfect set.  In the case $n=2$, Kleinian groups are more commonly referred to as Fuchsian groups.  For more background on hyperbolic geometry and Kleinian groups see \cite{beardon, bowditch,stratmann}.

At this point, you may recognise a few familiar sounding concepts.  First, the orbit $\Gamma(0)$ is an orbital set with condensation $C=\{0\}$, albeit the orbit is under a group action rather than a semigroup action as in the IFS case, but many similarities remain. Not least that the orbit accumulates on the fractal limit set, in this case $L(\Gamma)$. Furthermore, the  Poincar\'e exponent shares many features with the similarity dimension, recall \eqref{hutch}, or perhaps more accurately  with the zero of the topological pressure associated to a conformal system.  In particular, it provides a `best guess' for the dimensions of the limit set.

In \cite{bartlett},  Kleinian orbital sets were formally introduced and studied in the context of inhomogeneous attractors.   Fix a non-empty  set $C \subseteq \mathbb{D}^n$ and a Kleinian group $\Gamma$.    The \emph{orbital set} is defined to be
\[
\Gamma(C) = \bigcup_{g \in \Gamma} g(C).
\]
It is easy to see that the  limit set is  contained in the Euclidean closure of any orbital set.  

There is a celebrated connection between hyperbolic geometry (especially Fuchsian groups) and the artwork of M.~C.~Escher. Orbital sets fall very naturally into this discussion since many of the memorable images from Escher's work  are orbital sets (rather than tessellations).  Here  $C$ could be a  large central bat or fish, which is then repeated many times on smaller and smaller scales towards the boundary of $\mathbb{D}^n$. In some sense this makes Kleinian orbital sets the original inhomogeneous attractors since they already appeared  in the work of Escher.

\begin{comment}
\begin{figure}[H] \label{escher}
	\centering
	\includegraphics[width=0.32\textwidth]{lizard}
\includegraphics[width=0.32\textwidth]{devil}
\includegraphics[width=0.32\textwidth]{triangle}
\caption{Inhomogeneous art: two examples of work produced by M. C. Escher based on hyperbolic geometry (left and centre).  Both are orbital sets with a large central image taken to be $C$.  On the right is a tessellation of the hyperbolic plane by a triangular fundamental domain. This is also an orbital set with $C$ being any one of the triangles.  The triangles in the tessellation are coloured such that no two triangles sharing an edge are coloured the same.}
\end{figure}
\end{comment}

\begin{figure}[H]  
	\centering
	\includegraphics[width=0.45\textwidth]{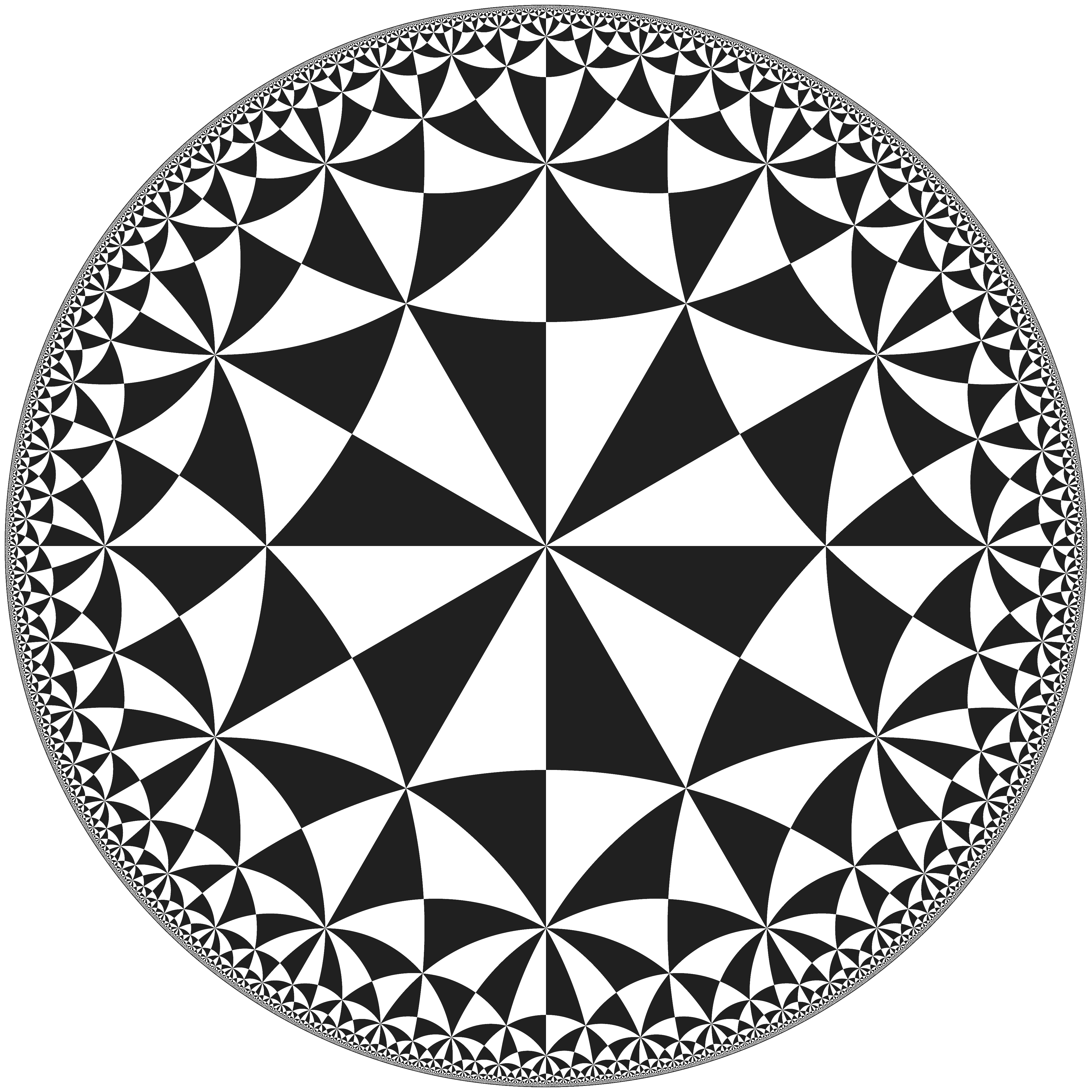}
\caption{Inhomogeneous tiling: a tessellation of the hyperbolic plane by a triangular fundamental domain. This is also an orbital set with $C$ being any one of the triangles.  The triangles in the tessellation are coloured such that no two triangles sharing an edge are coloured the same.}
\end{figure}

The main result of \cite{bartlett}  was a complete characterisation of the upper box dimension of Kleinian orbital sets with $C$ bounded in the hyperbolic metric. Boundedness in the hyperbolic metric simply means that $C$ is uniformly bounded away from the boundary $S^{n-1}$.  For clarity, we emphasise  that we compute the dimension of the orbital set with respect to the \emph{Euclidean} metric on $\mathbb{R}^n$.

\begin{thm}
\label{thm:2}
Let $\Gamma$ be a Kleinian group acting on $\mathbb{D}^n$ and $C$ be a non-empty  subset of $\mathbb{D}^n$ which is bounded in the hyperbolic metric. Then 
$$
\ubd \Gamma(C) = \max\left\{\ubd L(\Gamma), \ubd C, \delta(\Gamma)\right\}.
$$
\end{thm}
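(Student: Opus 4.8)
The plan is to follow the template of Theorem~\ref{main}, replacing the similarity ratios $\mathrm{Lip}(S_i)$ by conformal contraction ratios coming from the hyperbolic geometry of $\conp$, and replacing the Hutchinson sum by the Poincar\'e series. For $g\in\Gamma$ set $\rho_g=1-|g(0)|$. Because each $g$ is a hyperbolic isometry, its conformal derivative satisfies $|g'(z)|=(1-|g(z)|^2)/(1-|z|^2)$, and since $C$ is bounded in the hyperbolic metric it lies in a fixed hyperbolic ball on which $|g'|\asymp\rho_g$ with bounded oscillation. Thus $g|_C$ is a quasi-similarity of ratio comparable to $\rho_g$, with distortion constants depending only on the hyperbolic diameter of $C$. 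This yields the crucial covering identity $N_\rho(g(C))\asymp N_{\rho/\rho_g}(C)$, the exact analogue of the observation in Theorem~\ref{main} that covering a copy of $C$ scaled by $r$ at scale $\rho$ is the same as covering $C$ at the inflated scale $\rho/r$. Moreover, as $1+|g(0)|\in[1,2]$, we have $P_\Gamma(t)\asymp\sum_{g\in\Gamma}\rho_g^{\,t}$, so $\sum_g\rho_g^{\,t}<\infty$ whenever $t>\delta(\Gamma)$; this plays the role of \eqref{sumsize}, with $\delta(\Gamma)$ in place of $s$.

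For the lower bound, the first two terms are soft: $\ubd C\leq\ubd\Gamma(C)$ by monotonicity since $C=\mathrm{id}(C)\steq\Gamma(C)$, and $\ubd\LG\leq\ubd\Gamma(C)$ because $\LG\steq\cl{\Gamma(C)}$ and upper box dimension is unchanged under closure. The term $\delta(\Gamma)$ requires a genuine argument. I would use the standard fact that $\delta(\Gamma)=\limsup_{R\to\infty}R^{-1}\log\#\{g\in\Gamma:d(0,g(0))\leq R\}$, fix a point $c_0\in C$, and note that orbit points $g(c_0)$ lying in a bounded-width distance shell $d(0,g(0))\in[R,R+1]$ all have depth $\rho_g\asymp e^{-R}=:\rho$; any two of them within Euclidean distance $\rho$ are within bounded hyperbolic distance, so by proper discontinuity each $\rho$-ball meets only boundedly many of them. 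Hence the $\rho$-covering number of such a shell is comparable to its cardinality, and a pigeonhole over the $O(R)$ shells below a level $R$ realising the $\limsup$ produces scales at which $N_\rho(\Gamma(C))\geqs\rho^{-(\delta(\Gamma)-\eps)}$, giving $\ubd\Gamma(C)\geq\delta(\Gamma)$.

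For the upper bound, fix $t>\max\{\ubd\LG,\ubd C,\delta(\Gamma)\}$ and, for a scale $\rho\in(0,1]$, split $\Gamma(C)=\bigcup_g g(C)$ into big pieces with $\rho_g\geq\rho$ and small pieces with $\rho_g<\rho$. The big pieces are handled exactly as in Theorem~\ref{main}: covering them individually,
\[
\sum_{g:\,\rho_g\geq\rho} N_\rho(g(C))\ \asymp\ \sum_{g:\,\rho_g\geq\rho} N_{\rho/\rho_g}(C)\ \leqs\ \sum_{g:\,\rho_g\geq\rho}(\rho/\rho_g)^{-t}\ =\ \rho^{-t}\!\!\sum_{g:\,\rho_g\geq\rho}\!\rho_g^{\,t}\ \leqs\ \rho^{-t},
\]
using $N_{\rho/\rho_g}(C)\leqs(\rho/\rho_g)^{-t}$ (valid as $t>\ubd C$ and $\rho/\rho_g\leq1$) and $\sum_g\rho_g^{\,t}<\infty$ (as $t>\delta(\Gamma)$).

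The small pieces are where the real difficulty lies, and I expect this to be the main obstacle. Since each small piece $g(C)$ has Euclidean diameter $\leqs\rho_g<\rho$ and sits within $O(\rho)$ of its basepoint $g(0)$, it suffices to bound $N_\rho$ of the near-boundary orbit $\{g(0):\rho_g<\rho\}$, and the target is $N_\rho\leqs\rho^{-\max\{\ubd\LG,\delta(\Gamma)\}}$. One is tempted to argue that these deep orbit points hug $\LG$ at scale $\rho$ and so are controlled by $N_\rho(\LG)\leqs\rho^{-\ubd\LG}$; however this is false in general, since near parabolic (non-radial) limit points the orbit approaches horocyclically and its radial projections spread over boundary regions of diameter as large as $\sqrt{\rho}$ rather than $\rho$. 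The correct estimate must therefore separate a radial contribution, where the orbit does cluster within $O(\rho)$ of $\radlim\steq\LG$ and is bounded by $\ubd\LG$, from a horocyclic/transient contribution near cusps, whose $\rho$-covering number is instead governed by the orbit counting function and hence by $\delta(\Gamma)$. I would organise this through a dyadic decomposition of $\{g:\rho_g<\rho\}$ into depth shells $\rho_g\in[2^{-j-1}\rho,2^{-j}\rho)$, bounding each shell's projected covering number and summing in $j$, where convergence of the resulting series is forced precisely by $t>\delta(\Gamma)$. Granting this bound, the small pieces contribute $\leqs\rho^{-t}$, so $N_\rho(\Gamma(C))\leqs\rho^{-t}$ for all $\rho$, giving $\ubd\Gamma(C)\leq t$; letting $t$ decrease to the maximum and combining with the lower bound completes the proof.
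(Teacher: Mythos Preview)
The paper does not actually prove Theorem~\ref{thm:2}; it is stated there as the main result of \cite{bartlett} and the reader is referred to that paper for the argument. So there is no in-paper proof to compare against.

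That said, your template is sound and mirrors the proof of Theorem~\ref{main} in the way one would expect: the bounded-distortion estimate $|g'|\asymp\rho_g$ on a hyperbolically bounded $C$ is correct, the big-pieces bound goes through verbatim with the Poincar\'e series replacing \eqref{sumsize}, and your lower bounds for $\ubd C$, $\ubd L(\Gamma)$ and $\delta(\Gamma)$ are all right (the shell argument for $\delta(\Gamma)$ via proper discontinuity is exactly the standard one).

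The genuine gap is the one you flag yourself. For the small pieces you need
\[
N_\rho\big(\{g(0):\rho_g<\rho\}\big)\ \lesssim\ \rho^{-t}\qquad\text{for all }t>\max\{\ubd L(\Gamma),\delta(\Gamma)\},
\]
and your dyadic-shell sketch does not close: bounding shell $j$ by its cardinality gives $\#\{g:\rho_g\asymp 2^{-j}\rho\}\lesssim (2^{-j}\rho)^{-s}$ for $s>\delta(\Gamma)$, and summing $2^{js}\rho^{-s}$ over $j$ diverges. Your diagnosis that the horocyclic approach near parabolic points is the obstruction is correct (in the rank-one parabolic example the deep orbit is spread over an arc of Euclidean length $\asymp\sqrt{\rho}$, which is exactly what forces the $\delta(\Gamma)$ term rather than $\ubd L(\Gamma)$ to control it), but the sketch of ``separate radial from horocyclic and sum'' is not yet an argument. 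This step is precisely where the work in \cite{bartlett} lies, and you would need to consult that paper to see how the near-boundary orbit is actually covered; the rest of your outline is fine.
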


This should be compared with Theorem \ref{main}.  It is perhaps noteworthy that no assumptions are imposed on the Kleinian group and that each of the three terms appearing on the right hand side is necessary; see \cite{bartlett} for more information.  If we impose the additional assumption that $\Gamma$ is non-elementary and geometrically finite, then $\ubd L(\Gamma) = \delta(\Gamma)$   (see \cite{bishopjones,su}) and the result reduces to something more reminiscent of \eqref{boxexpected}.

In the proof of the above theorem,  essential use was made of the assumption that $C$ is bounded (in the hyperbolic metric).  It turns out that this is necessary and we exhibit a family of counterexamples from \cite{bartlett}  below.  

\begin{thm}
There exists a Kleinian (in fact, Fuchsian) group $\Gamma$ acting on $\mathbb{D}^2$ and a condensation set $C \subseteq \mathbb{D}^2$ such that
\[
\ubd L(\Gamma) =  \ubd C =  \delta(\Gamma) = 0
\]
but
\[
\lbd \Gamma(C) = \ubd \Gamma(C) = 1.
\]
\end{thm}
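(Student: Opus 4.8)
The plan is to realise the counterexample with the simplest possible Kleinian group, namely a cyclic group generated by a single loxodromic (hyperbolic) isometry. Such a group is elementary, so its limit set consists of exactly the two fixed points of the generator and therefore has upper box dimension $0$; moreover the orbit of a point escapes to the boundary at a \emph{linear} hyperbolic rate, which forces the Poincar\'e exponent to vanish. The whole difficulty is then pushed onto the condensation set: I would place $C$ on the axis of the generator, accumulating at one fixed point, so that $\ubd C = 0$ while the group orbit of $C$ is dense in the entire axis and hence has box dimension $1$. Note that $C$ is necessarily \emph{unbounded} in the hyperbolic metric, which is precisely why the formula of Theorem~\ref{thm:2} is permitted to fail; the example thus shows that the hyperbolic boundedness hypothesis there cannot be dropped.

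Concretely, I would work in the upper half-plane model $\mathbb{H}$ (isometric to $\mathbb{D}^2$ via the Cayley transform $\phi(z) = (z-i)/(z+i)$) and take $\Gamma = \langle g\rangle$ with $g(z) = 2z$, whose fixed points are $0,\infty$ and whose axis is the positive imaginary axis. Since $L(\Gamma) = \{0,\infty\}$ is a two-point set, $\ubd L(\Gamma) = 0$. For the Poincar\'e exponent, $d_{\mathbb{H}}(i, g^n(i)) = |n|\log 2$, so the Poincar\'e series equals $\sum_{n\in\mathbb{Z}} \exp(-s|n|\log 2) = \sum_{n} 2^{-s|n|}$, which converges for every $s>0$ and diverges at $s=0$; hence $\delta(\Gamma)=0$. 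Both quantities are isometry invariants, so they are unchanged on transporting the picture to $\mathbb{D}^2$ by $\phi$, which sends the axis to the real diameter and $0 \mapsto -1$.

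For the condensation set I would take $C = \{\, i\,y_{m,j} : m \ge 1,\ 0 \le j < m \,\}$ with $y_{m,j} = 2^{-m - j/m}$, a sequence on the axis accumulating only at $0$. Two facts drive the argument. First, $\ubd C = 0$: at scale $\delta = 2^{-M}$ only the blocks $m \le M$ are resolved, each contributing at most $m$ points, so $N_\delta(C) \lesssim M^2 = (\log_2 \delta^{-1})^2$, which is polylogarithmic and forces the box exponent to $0$. Since $C$ accumulates only at $0$, and $\phi$ is a M\"obius map with finite non-vanishing derivative there ($\phi'(0) = -2i$), $\phi$ is bi-Lipschitz near $0$ and preserves the Euclidean box dimension, giving $\ubd \phi(C) = 0$ in $\mathbb{D}^2$. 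Second, $\log_2 y_{m,j} \equiv -j/m \pmod 1$ and the set $\{\, j/m : 0 \le j < m,\ m \ge 1\,\}$ is dense in $[0,1)$; since $g$ acts on the axis by integer translation in the $\log_2$ coordinate, the orbit $\Gamma(C) = \{\, i\,2^n y_{m,j}\,\}$ is dense in the whole positive imaginary axis. Transporting by $\phi$, $\Gamma(C)$ becomes a dense subset of an open diameter of $\mathbb{D}^2$, whence $\cl{\Gamma(C)}$ is a line segment and $\lbd \Gamma(C) = \ubd \Gamma(C) = 1$, using that box dimension depends only on the closure.

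The main obstacle is the tension between the two requirements on $C$: it must be thin enough that $\ubd C = 0$, yet its logarithmic heights must be equidistributed modulo $1$ so that the countably many scalings $g^n$ interleave the copies densely along the axis. The block construction $y_{m,j}=2^{-m-j/m}$ is designed precisely so that both properties can be checked by hand, avoiding any appeal to equidistribution theorems. A secondary point requiring care is that the density in the axis must be genuine (every subinterval eventually contains an orbit point), so that the covering count is $\gtrsim \delta^{-1}$ at \emph{all} small scales and the \emph{lower} box dimension, not merely the upper, equals $1$; and one must confirm that the change of model $\phi$ does not distort the relevant dimensions, which follows from its bi-Lipschitz behaviour near the single accumulation point of $C$ together with the invariance of box dimension under closure for the orbit.
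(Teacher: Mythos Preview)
Your proposal is correct and follows the same overall strategy as the paper: both use a cyclic group generated by a single hyperbolic element, place $C$ on the axis accumulating at one fixed point, verify the three zeros directly, and then show the orbit is dense in the axis so that the box dimension of the closure is $1$.

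The genuine difference is in how density along the axis is arranged. The paper works directly in $\mathbb{D}^2$ with the generator $h(z)=\frac{(\alpha+1)z+(\alpha-1)}{(\alpha-1)z+(\alpha+1)}$ and takes the \emph{simpler} condensation set $C=\{1-\beta^n:n\in\mathbb{N}\}$; density of $\Gamma(C)$ in $(-1,1)$ then relies on the rational independence of $\log\alpha$ and $\log\beta$ together with Dirichlet's approximation theorem to show $\{\alpha^m\beta^n\}$ hits a dense set of targets. Your route instead engineers $C$ via the block heights $y_{m,j}=2^{-m-j/m}$ so that the $\log_2$-heights already exhaust $\mathbb{Q}\cap[0,1)$ modulo $1$, making density immediate from the density of the rationals and avoiding any Diophantine input. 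The trade-off is that your $C$ is a doubly indexed set requiring a short counting argument ($N_\delta(C)\lesssim(\log\delta^{-1})^2$) to get $\ubd C=0$, whereas the paper's geometric sequence gives this for free; conversely, your density step is entirely elementary while the paper's invokes Dirichlet. Your transport from $\mathbb{H}$ to $\mathbb{D}^2$ via the Cayley map is handled correctly: bi-Lipschitz near the sole accumulation point of $C$ preserves $\ubd C$, and density is a topological property preserved by the homeomorphism of the axis onto the real diameter.
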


\begin{proof}
The set $C$ and group $\Gamma$ are very simple.  The work is in proving that the orbital set has large dimension, and this relies on some number theory.  Let $\alpha>1$ and $\beta \in (0,1)$ be such that $\log \alpha$ and $\log \beta$ are rationally independent, that is, $\log \alpha /\log \beta \notin \mathbb{Q}$.  Here and throughout $\log$ is the natural logarithm.  For example, $\alpha = 2$, and $\beta = 1/3$ suffices.  Let
\[
C=\{ 1-\beta^n : n \in \mathbb{N}\} \subseteq \mathbb{D}^2
\]
noting that $C$ is unbounded in $(\mathbb{D}^2, d)$.  Let $h \in \textup{con}^+(\mathbb{D}^2)$ be the hyperbolic element with repelling fixed point $-1$ and attracting fixed point $1$ given by
\[
h(z) = \frac{(\alpha+1)z + (\alpha-1)}{(\alpha-1)z + (\alpha+1)}.
\]
Let $\Gamma = \langle h \rangle$ be the elementary Fuchsian group generated by $h$.  It is an elementary but instructive exercise to show that
\[
\ubd L(\Gamma) =  \ubd C =  \delta(\Gamma) = 0.
\]
We encourage the reader to try this for themselves. In order to prove that $\lbd \Gamma(C) = \ubd \Gamma(C)= 1$ we show that $\Gamma(C)$ is dense in $(-1,1) \subseteq \mathbb{D}^2$, recalling that the box dimensions are stable under taking closure. This shows that the   box dimensions are at least 1, but the orbital set is contained in $(-1,1)$ and so they are also at most 1.  The orbital set has a straightforward description due to the simplicity of $\Gamma$ and $C$.  Indeed, 
\begin{align*}
\Gamma(C) &= \{ h^m( 1-\beta^n) : m \in \mathbb{Z}, \, n \in \mathbb{N}\} \\ 
&= \left\{ \frac{(\alpha^m+1)( 1-\beta^n) + (\alpha^m-1)}{(\alpha^m-1)( 1-\beta^n) + (\alpha^m+1)} : m \in \mathbb{Z}, \, n \in \mathbb{N}\right\} \\ 
&= \left\{ \frac{ 2-\alpha^m\beta^n -\beta^n}{2+\alpha^m\beta^n -\beta^n} : m \in \mathbb{Z}, \, n \in \mathbb{N}\right\}
\end{align*}
noting that we switch the role of $m$ and $-m$ in the final expression, which is fine since $m \in \mathbb{Z}$. Let $y \in (0,\infty)$ be such that $\log y \in \mathbb{Q}$. Since $\log \alpha/\log \beta \notin \mathbb{Q}$ we can find sequences $m_k \in \mathbb{Z}, \, n_k \in \mathbb{N}$ such that
\[
\alpha^{m_k}\beta^{n_k} \to y
\]
as $k \to \infty$.  This is a standard application of Dirichlet's approximation theorem.  Moreover, again since  $\log \alpha$ and $\log \beta$ are rationally independent, we necessarily have $n_k \to \infty$ as $k \to \infty$.  Therefore
\[
\frac{ 2-\alpha^{m_k}\beta^{n_k} -\beta^{n_k}}{2+\alpha^{m_k}\beta^{n_k} -\beta^{n_k}} \to \frac{2-y}{2+y}
\]
as $k \to \infty$.  The set
\[
\left\{ \frac{2-y}{2+y} : y \in (0,\infty) \text{ and } \log y \in \mathbb{Q}\right\}
\]
is dense in $(-1,1)$ and the density of $\Gamma(C)$ in $(-1,1)$ follows.
\end{proof}

\section*{Acknowledgements}

I am grateful to Lars Olsen for first introducing me to inhomogeneous attractors. I was quite inspired by Nina Snigireva's PhD thesis on the topic, back when I first read it in 2010 or so. I am also grateful to Simon Baker,  Amlan Banaji, Tom  Bartlett,   Stuart Burrell, Antti K\"aenm\"aki,  Andras Math\'e, and Alex Rutar for various helpful discussions over the years.

I wrote this expository article in May and June 2024 during pleasant trips  to Madison, Banff and Warwick.  I blame jet-lag or dodgy airport Wi-Fi for any typos or inconsistencies!

\begin{comment}
\end{comment}

\end{document}